\documentclass[11pt,reqno]{amsart}
\usepackage[T1]{fontenc}
\usepackage{lmodern}
\usepackage{amsmath,amssymb,mathtools}
\usepackage{enumitem}
\usepackage{microtype}
\usepackage[a4paper,margin=22mm]{geometry}
\usepackage[draft]{hyperref} 
\usepackage{tikz-cd}

\newtheorem{theorem}{Theorem}[section]
\newtheorem{lemma}[theorem]{Lemma}
\newtheorem{proposition}[theorem]{Proposition}
\newtheorem{corollary}[theorem]{Corollary}
\newtheorem{conjecture}[theorem]{Conjecture}
\theoremstyle{definition}
\newtheorem{definition}[theorem]{Definition}
\theoremstyle{remark}

\theoremstyle{plain}

\DeclareMathOperator{\Hom}{Hom}
\DeclareMathOperator{\End}{End}
\DeclareMathOperator{\Ext}{Ext}
\DeclareMathOperator{\Tor}{Tor}
\DeclareMathOperator{\add}{add}
\DeclareMathOperator{\domdim}{domdim}
\DeclareMathOperator{\pdim}{pdim}

\DeclareMathOperator{\rad}{rad}

\newcommand{\op}{\mathrm{op}}
\newcommand{\modu}{\operatorname{mod}}
\newcommand{\Db}{\mathrm{D}^{\mathrm b}}
\newcommand{\NC}{\mathrm{NC}}
\newcommand{\TCone}{\mathrm{TC1}}
\newcommand{\TCtwo}{\mathrm{TC2}}
\newcommand{\ARC}{\mathrm{ARC}}

\numberwithin{equation}{section}
\setlist[enumerate]{label=\textup{(\arabic*)},leftmargin=*,itemsep=2pt}
\allowdisplaybreaks[1]
\title[On the homological conjectures for Artin algebras]{On the homological conjectures for Artin algebras}
\author[H. Enomoto]{Haruhisa Enomoto}
\address{Haruhisa Enomoto: Parakeet Inc., Japan}
\email{haruhisa.enomoto.math@gmail.com}
\author[R. Marczinzik]{Rene Marczinzik}
\address{Rene Marczinzik: Mathematical Institute of the University of Bonn,
Endenicher Allee 60, 53115 Bonn, Germany}
\email{marczire@math.uni-bonn.de}
\date{\today}
\subjclass[2020]{Primary 16E10; Secondary 16D50, 16E30, 16G10}
\keywords{Tachikawa conjectures, Auslander--Reiten conjecture, Nakayama conjecture, dominant dimension, trivial extensions, Morita algebras, derived equivalences}

\begin{document}
\begin{abstract}
We prove that the Auslander--Reiten conjecture, the generalised Nakayama
conjecture, the Nakayama conjecture, and each of the two Tachikawa
conjectures are globally equivalent for Artin algebras over any fixed
commutative artinian ring. For finite dimensional algebras over a
fixed field, we also prove that the derived invariance of the Nakayama
conjecture is equivalent to the truth of the Nakayama conjecture for all such algebras, answering a question raised by Chen and Xi.
\end{abstract}
\maketitle

\section{Introduction}
In this article we give a new viewpoint on the longstanding homological conjectures for Artin algebras.
For background and historical accounts, see the surveys of
Happel~\cite{Hap91}, Yamagata~\cite{Yam96}, and
Huisgen-Zimmermann~\cite{HZ95}.

Throughout the first three sections, $R$ is a fixed commutative artinian
ring, algebras are Artin $R$-algebras, and modules are finitely generated
right modules. 
The \emph{finitistic dimension} of $A$ is
\[
\operatorname{findim}A
=\sup\{\pdim_A M\mid M\in\modu A,\ \pdim_A M<\infty\}.
\]
For an algebra $A$ with minimal injective coresolution
\[
0\longrightarrow A_A\longrightarrow I^0\longrightarrow I^1
\longrightarrow\cdots,
\]
the \emph{dominant dimension} $\domdim A$ is the least $n\geq0$ such
that $I^n$ is not projective, and is infinite if all the terms are
projective. In particular, a selfinjective algebra has infinite dominant
dimension. The \emph{grade} of a module $M$ is
\[
\operatorname{grade}_A M
=\inf\{i\geq0\mid\Ext_A^i(M,A)\neq0\},
\qquad\inf\varnothing=\infty.
\]
Following G\'elinas~\cite[Definition~1.1]{Gel22}, the \emph{depth} of
$A$ is the supremum of the grades of its simple right modules.
Since there are only finitely many isomorphism classes of simple modules,
this depth is finite exactly when each simple module has finite grade.

We next recall the four most important homological conjectures related to finiteness of homological dimensions for Artin algebras. The numbers in parentheses refer
to the list of homological conjectures in \cite[p.~410]{ARS95}.
\begin{enumerate}
\item The \emph{finitistic dimension conjecture} (FDC): every Artin
algebra has finite finitistic dimension (Conjecture~(11)).
\item The \emph{Gorenstein symmetry conjecture} (GSC): the left regular
module of an Artin algebra has finite injective dimension if and only
if the right regular module does (Conjecture~(13)).
\item The \emph{generalised Nakayama conjecture} (GNC): every Artin
algebra has finite depth (Conjecture~(9)). 
\item The \emph{Nakayama conjecture} (NC): every non-selfinjective Artin
algebra has finite dominant dimension (Conjecture~(8)).
\end{enumerate}
Many of those conjectures are quite old. For example the Nakayama conjecture dates back to the 1950's, see \cite{Nak58}.
The following four further prominent conjectures will be relevant:
\begin{enumerate}
\item The \emph{strong Nakayama conjecture} (SNC): every nonzero module
has finite grade (Conjecture~(12) in \cite{ARS95}).
\item The \emph{Auslander--Reiten conjecture} (ARC): if
$\Ext_A^i(M,M\oplus A)=0$ for every $i>0$, then $M$ is projective
(Conjecture~(10) in \cite{ARS95}).
\item The \emph{first Tachikawa conjecture} (TC1): if
$\Ext_A^i(DA,A)=0$ for every $i>0$, then $A$ is selfinjective
\cite[\S8]{Tac73}.
\item The \emph{second Tachikawa conjecture} (TC2): if $A$ is
selfinjective and $\Ext_A^i(M,M)=0$ for every $i>0$, then $M$ is
projective \cite[\S8]{Tac73}.
\end{enumerate}

An implication or equivalence between conjectures is called
\emph{global} if it concerns their validity for all Artin $R$-algebras
over the fixed ring $R$; TC2 is quantified over the selfinjective ones.
We remark, that global equivalence need not be an equivalence for each individual algebra.
For example, NC holds for every local Artin algebra, but the corresponding
Tachikawa questions remain open in general even for commutative
artinian local rings. For recent examples concerning low-degree Ext
vanishing in that setting, see \cite{BM26}.
For related open conjectures over commutative local rings inspired by the conjectures in the Artin algebra case, see for example
\cite{ABS05} and \cite{HL04}.

Auslander and Reiten proved the global equivalence of ARC and
GNC~\cite{AR75}. Classical results also give the global equivalence
of NC with the conjunction of TC1 and TC2; see
\cite{Mue68,Tac73}. The following familiar implications provide the
context for our results; see \cite[p.~410]{ARS95} and
\cite[\S3.4]{Yam96}:
\[
\begin{tikzcd}[row sep=small]
 & \mathrm{FDC} & \\
 & \mathrm{SNC} & \mathrm{GSC} \\
 & \mathrm{GNC}\ \Longleftrightarrow\ \mathrm{ARC} & \\
 & \mathrm{NC} & \\
 \mathrm{TC1} && \mathrm{TC2}
 \arrow[Rightarrow,from=1-2,to=2-2]
 \arrow[Rightarrow,from=1-2,to=2-3]
 \arrow[Rightarrow,from=2-2,to=3-2]
 \arrow[Rightarrow,from=3-2,to=4-2]
 \arrow[Rightarrow,from=4-2,to=5-1]
 \arrow[Rightarrow,from=4-2,to=5-3]
\end{tikzcd}
\]
All arrows in this diagram are global implications.

Our first main result shows that each Tachikawa conjecture separately
implies ARC.
\begin{theorem}\label{mainresult1}
Fix a commutative artinian ring $R$. The following conjectures are
globally equivalent for Artin $R$-algebras:
\begin{enumerate}
\item the Auslander--Reiten conjecture;
\item the generalised Nakayama conjecture;
\item the Nakayama conjecture;
\item the first Tachikawa conjecture;
\item the second Tachikawa conjecture.
\end{enumerate}
\end{theorem}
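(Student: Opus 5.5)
The plan is to close the diagram in the introduction. We already know the global implications $\mathrm{ARC}\Leftrightarrow\mathrm{GNC}$ \cite{AR75}, $\mathrm{GNC}\Rightarrow\mathrm{NC}$, and $\mathrm{NC}\Rightarrow\TCone$, $\mathrm{NC}\Rightarrow\TCtwo$. So it suffices to prove two global implications separately: $\TCone\Rightarrow\mathrm{GNC}$ and $\TCtwo\Rightarrow\mathrm{ARC}$.

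In each case I argue by contraposition. Start from an Artin $R$-algebra $A$ that violates GNC or ARC. From it I build a new Artin $R$-algebra, over the same ring $R$, that violates the relevant Tachikawa conjecture. Staying over the same $R$ is essential, since the equivalences are only claimed for fixed $R$.

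\textbf{$\TCtwo\Rightarrow\mathrm{ARC}$.} Suppose $M\in\modu A$ is nonprojective with $\Ext_A^i(M,M\oplus A)=0$ for all $i>0$. I would pass to the trivial extension $T=A\ltimes DA$, which is selfinjective (even symmetric) and is again an Artin $R$-algebra. The candidate counterexample to TC2 is a $T$-module built from $M$. The first choice is the induced module $M\otimes_A T$. The fallback is $M$ itself, viewed as a $T$-module via $T\to A$, possibly summed with $A_T$.

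The key computation is a change-of-rings comparison. For the induced module, adjunction gives
\[
\Ext_T^i(M\otimes_AT,\,M\otimes_AT)\cong\Ext_A^i(M,\,M\otimes_A T),
\]
because $T_A$ is flat, being $A\oplus DA$ as a right $A$-module. As right $A$-modules, $M\otimes_AT\cong M\oplus(M\otimes_A DA)$, so the right-hand side splits as
\[
\Ext_A^i(M,M)\oplus\Ext_A^i(M,\,M\otimes_ADA).
\]
The first summand vanishes by hypothesis. For the second, I would use $M\otimes_A DA\cong D\Hom_A(M,A)$ together with the Auslander--Reiten formula, or the duality $\Ext_A^i(M,D\Hom_A(M,A))\cong D\Tor_i^A(M,\Hom_A(M,A))$. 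The aim is to show this summand vanishes because $\Ext_A^{>0}(M,A)=0$, which lets one replace $\Hom_A(M,A)$ by an $\Ext$-dual complex. Finally, $M\otimes_AT$ is projective over $T$ exactly when $M$ is projective over $A$, since $-\otimes_AT$ sends the projective cover of $M$ to a projective cover. Hence we obtain a nonprojective $T$-module with vanishing self-extensions, contradicting TC2. I expect this vanishing of the cross term to be the main obstacle. If it fails in general, I would first replace $A$ by a Morita algebra $\End_A(A\oplus DA)$, where the relevant duals become projective, and then take the trivial extension.

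\textbf{$\TCone\Rightarrow\mathrm{GNC}$.} Suppose a simple module $S$ has $\Ext_A^i(S,A)=0$ for all $i\ge0$. I would build a non-selfinjective algebra $B$ with $\Ext_B^{>0}(DB,B)=0$. The natural candidate is a one-point (co)extension, or the triangular matrix algebra $B=\begin{pmatrix}A&0\\ X&\mathrm{End}\end{pmatrix}$ built from $S$ or its injective envelope. The new indecomposable injective then has to be computed, and the hypothesis $\operatorname{grade}S=\infty$ should make all $\Ext$ from $DB$ to $B$ vanish. The summands already coming from $DA$ are handled through the original grade condition.

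If this direct construction proves unwieldy, an alternative route is available. Use TC1 to deduce the Nakayama conjecture for dominant dimension $\ge2$ algebras of the form $\End_B(B\oplus DB)$. Then deduce GNC through the classical reduction of GNC to NC applied to suitable endomorphism algebras, in the spirit of the Müller–Tachikawa arguments.
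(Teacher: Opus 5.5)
Your overall strategy matches the paper's: prove $\TCtwo\Rightarrow\ARC$ and $\TCone\Rightarrow\mathrm{GNC}$ (or $\ARC$) by turning a counterexample over $A$ into one over a new Artin $R$-algebra. But neither construction goes through as proposed.

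For $\TCtwo\Rightarrow\ARC$, the ordinary trivial extension $T=A\ltimes DA$ fails exactly at the step you flagged. Since $(M\otimes_AT)_A\cong M\oplus\nu_AM$, you need $\Ext_A^i(M,\nu_AM)\cong D\Tor_i^A(M,\Hom_A(M,A))=0$. In general $\Tor_i^A(M,\Hom_A(M,A))\cong\underline{\Hom}_A(M,\Omega^{i+1}M)$ for $i\ge1$. Dimension shifting with $\Ext^{>0}_A(M,A)=0$ only turns this into $\Ext^k_A(M,\Omega^{i+1+k}M)$, never into $\Ext^{>0}_A(M,M)$. So your hypotheses give no control over the cross term, and the fallback via $\End_A(A\oplus DA)$ is not an argument. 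Separately, the $\Tor$-vanishing needed for the adjunction concerns ${}_AT$, not $T_A$, and $DA$ is not flat. It does hold, but because $D\Tor_i^A(M,DA)\cong\Ext_A^i(M,A)=0$.

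The missing idea is to put the two pieces on different vertices. The paper uses $\Gamma=T_2(A)=\begin{pmatrix}A&DA\\DA&A\end{pmatrix}$, with products of off-diagonal entries zero. It is still selfinjective, and $A=e_1\Gamma e_1$ is a corner. For $X=M\otimes_Ae_1\Gamma$ one has $Xe_1\cong M$ and $Xe_2\cong M\otimes_ADA$. The corner adjunction then gives $\Ext_\Gamma^i(X,X)\cong\Ext_A^i(M,Xe_1)=\Ext_A^i(M,M)=0$, so the cross term never appears.

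For $\TCone$ you give no construction. The one-point extension idea cannot work as stated. For arbitrary $A$ the groups $\Ext_A^{>0}(DA,A)$ may be nonzero, and infinite grade of one simple module says nothing about them, so the summands of $DB$ coming from $DA$ are not handled. The alternative route is circular: there is no classical reduction of GNC to NC. Classically one only has $\mathrm{GNC}\Rightarrow\mathrm{NC}\Leftrightarrow(\TCone\wedge\TCtwo)$, and the reverse implications are exactly what this theorem adds.

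The paper instead targets $\ARC$. With $M$ as above, set $\Gamma=T_3(A)$, $X=M\otimes_Ae_1\Gamma$, $V=\Gamma\oplus X$ and $B=\End_\Gamma(V)$. A general computation gives $\Ext_B^n(DB,B)\cong\Ext_\Gamma^n(\nu_\Gamma V,V)$ for selforthogonal $V$. The cyclic shift $\nu_\Gamma X\cong M\otimes_Ae_3\Gamma$ then gives $\Ext_\Gamma^n(\nu_\Gamma X,X)\cong\Ext_A^n(M,Xe_3)=0$, because $X$ is supported only at vertices $1,2$; this is why $r\ge3$ is needed. Hence $\Ext_B^{>0}(DB,B)=0$. $\TCone$ makes $B$ selfinjective, which forces the generator-cogenerator $V$ to be injective, hence projective over $\Gamma$, hence $M$ projective.
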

We will prove this theorem using constructions related to $r$-fold trivial extension algebras to show that the first Tachikawa conjecture implies the Auslander-Reiten conjecture globally and then also to show that the second Tachikawa conjecture implies the Auslander-Reiten conjecture globally.
Having proved this, the above diagram looks as follows and proves theorem \ref{mainresult1}:
\[
\begin{tikzcd}[row sep=small]
 & \mathrm{FDC} & \\
 & \mathrm{SNC} & \mathrm{GSC} \\
 & \mathrm{GNC}\ \Longleftrightarrow\ \mathrm{ARC} & \\
 & \mathrm{NC} & \\
 \mathrm{TC1} && \mathrm{TC2}
 \arrow[Rightarrow,from=1-2,to=2-2]
 \arrow[Rightarrow,from=1-2,to=2-3]
 \arrow[Rightarrow,from=2-2,to=3-2]
 \arrow[Rightarrow,from=3-2,to=4-2]
 \arrow[Rightarrow,from=4-2,to=5-1]
 \arrow[Rightarrow,from=4-2,to=5-3]
 \arrow[Rightarrow,from=5-1,to=3-2,
        bend left=25,end anchor=west]
 \arrow[Rightarrow,from=5-3,to=3-2,
        bend right=25,end anchor=east]
\end{tikzcd}
\]

Thus we can simplify it to the following diagram, where up to global equivalence only 4 homological conjectures remain:
\[
\begin{tikzcd}[row sep=small]
\mathrm{FDC} & \\
\mathrm{SNC} & \mathrm{GSC} \\
\mathrm{NC}
\arrow[Rightarrow,from=1-1,to=2-1]
\arrow[Rightarrow,from=1-1,to=2-2]
\arrow[Rightarrow,from=2-1,to=3-1]
\end{tikzcd}
\]
Here NC represents any of the five globally equivalent conjectures
in Theorem~\ref{mainresult1}, we picked the Nakayama conjecture as a representative as it might be the historically most important one.

Our second application concerns finite dimensional algebras over a
fixed field $K$. Chen and Xi formulated the following conjecture
\cite[\S5.2, conjecture~(1)]{CX16}; see also
\cite[Conjecture~6.5]{Xi18}.
\begin{conjecture}[Chen--Xi]\label{conj:CX}
If $A$ and $B$ are derived equivalent finite dimensional $K$-algebras,
then $\domdim A<\infty$ if and only if $\domdim B<\infty$.
\end{conjecture}
Write $\NC(A)$ for the implication that $\domdim A=\infty$ forces
$A$ to be selfinjective.
\begin{theorem}\label{mainresult2}
For finite dimensional algebras over a fixed field $K$, the following
are equivalent:
\begin{enumerate}
\item NC holds for every such algebra;
\item $\NC(A)$ and $\NC(B)$ are equivalent whenever $A$ and $B$ are
derived equivalent;
\item the Chen--Xi conjecture holds.
\end{enumerate}
\end{theorem}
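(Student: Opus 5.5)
Theorem~\ref{mainresult2} has three conditions; I will prove (1)$\Rightarrow$(2)$\Leftrightarrow$(3) directly and put all the real work into (2)$\Rightarrow$(1).

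\emph{Easy directions.} If NC holds for every finite dimensional $K$-algebra, then $\NC(A)$ and $\NC(B)$ are both true for every pair, so (1)$\Rightarrow$(2) is trivial. For (2)$\Leftrightarrow$(3) I use one fact: a derived equivalence preserves selfinjectivity. Rickard's theorem gives that a derived equivalent algebra of a selfinjective algebra is selfinjective; this is true over a field because being selfinjective is the same as $\add A=\add DA$, which can be read off the Serre functor and perfect complexes. Suppose $A$ and $B$ are derived equivalent.
\begin{itemize}
\item If both are selfinjective, both have infinite dominant dimension and both $\NC$ statements hold.
\item If neither is selfinjective, then $\NC(A)$ is equivalent to $\domdim A<\infty$, and likewise for $B$.
\end{itemize}
So in every case the equivalence $\NC(A)\Leftrightarrow\NC(B)$ is the same as $\domdim A<\infty\Leftrightarrow\domdim B<\infty$, which gives (2)$\Leftrightarrow$(3).

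\emph{The direction (2)$\Rightarrow$(1).} By Theorem~\ref{mainresult1} it suffices to deduce the global truth of one of the equivalent conjectures from (2). I would aim for the Auslander--Reiten conjecture or TC1. The strategy is to attach to an arbitrary algebra $A$ a pair of derived equivalent algebras $B$ and $B'$ such that:
\begin{itemize}
\item $B'$ has finite dominant dimension and is not selfinjective, so $\NC(B')$ holds;
\item $\NC(B)$ forces the conjecture for $A$.
\end{itemize}
Then (2) transports $\NC$ from $B'$ to $B$.

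\emph{Choice of construction.} The natural candidates are the ones already used for Theorem~\ref{mainresult1}: $r$-fold trivial extensions $T_r(A)$ and their Morita-type (endomorphism-algebra) modifications. Concretely, given a module $M$ with $\Ext^i_A(M,M\oplus A)=0$ for $i>0$, I would form
\begin{itemize}
\item $B=\End_{T}(T\oplus M')$, where $T$ is a selfinjective algebra such as $T_r(A)$ and $M'$ is a module built from $M$ with self-orthogonality $\Ext^{>0}_T(M',M')=0$, so that $\domdim B=\infty$ exactly when $M'$ is self-orthogonal;
\item $B'$ a derived equivalent algebra, for instance a tilting mutation replacing $M'$ by its syzygy or a relative cone, chosen so that $B'$ is visibly non-selfinjective with finite dominant dimension.
\end{itemize}
Derived equivalences between endomorphism algebras of generators over selfinjective algebras, of the form $\End(T\oplus M)\simeq_{\mathrm{der}}\End(T\oplus\Omega M)$ (Hu--Xi almost-$\nu$-stable equivalences), are the tool I would use to produce $B'$.

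\emph{Main obstacle.} The hard step is building $B'$ with provably finite dominant dimension while keeping the derived equivalence. For example, if $M$ is non-projective indecomposable, I would try to use $\Omega$-shifts that break the Ext-vanishing in degree $1$, so that $\domdim B'$ is exactly finite by Müller's theorem. Then $\NC(B)$ yields $B$ selfinjective, hence $M'$ projective, hence $M$ projective, which is the Auslander--Reiten conjecture for $A$, and Theorem~\ref{mainresult1} gives (1).
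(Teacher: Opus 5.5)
Your easy directions are fine. (1)$\Rightarrow$(2) is trivial. Your pointwise argument for (2)$\Leftrightarrow$(3) is correct, given that selfinjectivity is invariant under derived equivalence. It is actually more direct than the paper, which closes (3)$\Rightarrow$(1) by a second use of the canonical tilting module. You should cite that invariance as a known theorem, as the paper does, because your Serre-functor heuristic is not a proof. How the Serre functor acts on perfect complexes only detects whether $K^b(\mathrm{proj})=K^b(\mathrm{inj})$, i.e.\ Gorensteinness. Also, the object $A$ itself is not intrinsic to $\Db(\modu A)$.

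The genuine gap is (2)$\Rightarrow$(1), which is the whole content of the theorem. You never produce the partner $B'$, and the concrete mechanism you suggest cannot work. For $T$ selfinjective and $i\geq1$ one has $\Ext_T^i(\Omega M',\Omega M')\cong\Ext_T^i(M',M')$, because $\Omega$ is an autoequivalence of the stable category. Since $T\oplus\Omega M'$ is again a generator-cogenerator, M\"uller's formula gives $\domdim\End_T(T\oplus\Omega M')=\domdim\End_T(T\oplus M')=\infty$. So $\Omega$-shifts can never ``break the Ext-vanishing in degree $1$''. The derived equivalence $\End_T(T\oplus M')\simeq\End_T(T\oplus\Omega M')$ therefore never yields finite dominant dimension. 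More structurally, every $B=\End_T(T\oplus M')$ with $T$ selfinjective is a Morita algebra. The known tool that lowers dominant dimension along a tilting equivalence applies only to \emph{non}-Morita algebras.

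That tool is the missing idea: Proposition~\ref{prop:chen-xi-input}(2). It says that if $\domdim A\geq2$ and $A$ is not Morita, then the canonical tilting module $U=I^0(A)\oplus\Omega_A^{-1}A$ of Lemma~\ref{lem:canonical} satisfies $\domdim\End_A(U)\leq1$.

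With it, (2) shows that every algebra $A$ of infinite dominant dimension is Morita. Otherwise $A$ is not selfinjective, so $U$ gives a derived equivalence, and $\NC(\End_A(U))$ holds vacuously. Then (2) gives $\NC(A)$, so $A$ is selfinjective, a contradiction.

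TC1 then follows from uniqueness in the Morita--Tachikawa correspondence. If $\Ext_S^i(DS,S)=0$ for $i>0$, then $\End_S(S\oplus DS)$ has infinite dominant dimension, so it is Morita. Hence $S$ is Morita equivalent to a selfinjective algebra and is itself selfinjective. Theorem~\ref{mainresult1} then gives NC.

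Your ARC route can be repaired the same way. If $B_M$ of Theorem~\ref{thm:main} is not selfinjective, it violates TC1, and $\End_{B_M}(B_M\oplus DB_M)$ is a non-Morita algebra of infinite dominant dimension to which Chen--Xi applies. But then the detour through ARC and $T_3(A)$ is unnecessary. Since your (3)$\Rightarrow$(1) runs through (2)$\Rightarrow$(1), it inherits the same gap.
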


\section{Preliminaries}\label{sec:prelim}
$R$ is a fixed
commutative artinian ring, $E$ is an injective envelope of $R/\rad R$, and
$D=\Hom_R(-,E)$. An algebra means an associative unital Artin $R$-algebra, finite as an
$R$-module. 
All modules are finitely generated right modules unless otherwise stated.
We write $\modu A$ for the module category of finitely generated $A$-modules and $\add M$ for the full subcategory
of direct summands of finite direct sums of copies of $M$ for an $A$-module $M$.
We refer to \cite{ASS06,SY11,ARS95} for standard textbooks on representation theory of Artin algebras.
The Nakayama functor is given by
\[
\nu_A=D\Hom_A(-,A)\cong-\otimes_A DA.
\]
A module $M$ is \emph{selforthogonal} if $\Ext_A^i(M,M)=0$ for every $i>0$ and an algebra is \emph{selfinjective} if its right regular module is injective.
A module $V$ is a \emph{generator} if $A\in\add V$, and a \emph{cogenerator} if
$DA\in\add V$. Thus a \emph{generator-cogenerator} contains every projective
and every injective module in its additive closure.
An \emph{additive generator} of a subcategory $\mathcal C$ is a module
$Q$ with $\mathcal C=\add Q$. 
Endomorphism algebras are multiplied by composition: $ab=a\circ b$. Thus $\Hom_A(V,N)$ is a right $\End_A(V)$-module by precomposition, while $V$ is a left $\End_A(V)$-module by evaluation.

\begin{lemma}\label{lem:induction}
Let $\Gamma$ be an algebra, let $e\in\Gamma$ be an idempotent and put $A=e\Gamma e$. Consider
\[
F=-\otimes_A e\Gamma:\modu A\longrightarrow\modu\Gamma,
\qquad H=(-)e:\modu\Gamma\longrightarrow\modu A.
\]
Then the following hold.
\begin{enumerate}
\item $F$ is fully faithful and preserves and reflects projective modules.
\item If $\Tor_i^A(M,e\Gamma)=0$ for every $i>0$, then, for every $Z\in\modu\Gamma$ and $n\geq0$, there is a natural isomorphism
\begin{equation}\label{eq:indExt}
\Ext_\Gamma^n(FM,Z)\cong\Ext_A^n(M,Ze).
\end{equation}
\end{enumerate}
\end{lemma}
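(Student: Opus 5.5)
The plan is to exploit the tensor--hom adjunction between $F$ and $H$. First note that $Ze=Z\otimes_\Gamma\Gamma e\cong\Hom_\Gamma(e\Gamma,Z)$, so $H\cong\Hom_\Gamma(e\Gamma,-)$. Here $e\Gamma$ is an $A$-$\Gamma$-bimodule. Adjunction then gives a natural isomorphism
\[
\Hom_\Gamma(M\otimes_A e\Gamma,Z)\cong\Hom_A(M,\Hom_\Gamma(e\Gamma,Z))\cong\Hom_A(M,Ze),
\]
so $F$ is left adjoint to $H$. Moreover $HF(M)=M\otimes_A e\Gamma e=M\otimes_A A\cong M$ naturally, and one checks that the unit $M\to HFM$ is this isomorphism.

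For (1), full faithfulness follows from the unit being an isomorphism, since $\Hom_\Gamma(FM,FN)\cong\Hom_A(M,HFN)\cong\Hom_A(M,N)$. Preservation of projectives holds because $F(A)=e\Gamma$ is a direct summand of $\Gamma$ and $F$ is additive. For reflection, suppose $FM$ is projective and take a projective cover $P\to M$. Since $F$ is right exact, $FP\to FM$ is surjective and hence splits. Applying $H$ and using $HF\cong\mathrm{id}$ naturally, the splitting transfers to a splitting of $P\to M$, so $M$ is projective. Alternatively: $FM$ is a projective $\Gamma$-module, so $M\cong (FM)e$ is a direct summand of some $(\Gamma^n)e=(\Gamma e)^n$. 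That route is less clean, since $\Gamma e$ need not be projective over $A$, so I would use the splitting argument.

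For (2), take a projective resolution $P_\bullet\to M$ over $A$. The hypothesis $\Tor_i^A(M,e\Gamma)=0$ for $i>0$ says exactly that $FP_\bullet\to FM$ is exact. By (1) each $FP_i$ is projective, so $FP_\bullet$ is a projective resolution of $FM$. Then
\[
\Ext^n_\Gamma(FM,Z)=H^n\Hom_\Gamma(FP_\bullet,Z)\cong H^n\Hom_A(P_\bullet,Ze)=\Ext^n_A(M,Ze),
\]
using the adjunction isomorphism termwise; it is natural and commutes with the differentials. Naturality in $Z$ and $M$ follows from the naturality of the adjunction and the comparison theorem for resolutions.

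There is no serious obstacle here. The only points needing care are the identification of the unit of the adjunction with the canonical isomorphism $M\otimes_A e\Gamma e\cong M$, and the use of right modules and the stated composition convention, so that $e\Gamma$ really is an $A$-$\Gamma$-bimodule and $Ze$ is a right $A$-module.
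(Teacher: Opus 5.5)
Your proposal is correct. The paper gives no argument of its own for this lemma; it only cites Auslander--Platzeck--Todorov for both parts. Your proof is a self-contained version of the standard argument behind those citations.

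Its steps are:
\begin{enumerate}
\item The adjunction $F\dashv H$ comes from $Ze\cong\Hom_\Gamma(e\Gamma,Z)$.
\item The unit $M\to(M\otimes_A e\Gamma)e\cong M\otimes_A e\Gamma e\cong M$ is invertible, which gives full faithfulness.
\item $F(A)=e\Gamma$ is a summand of $\Gamma_\Gamma$, so $F$ preserves projectives.
\item Right exactness and the invertible unit carry a splitting of $FP\to FM$ back to a splitting of $P\to M$, so $F$ reflects projectives.
\item The $\Tor$-vanishing hypothesis makes $F$ send a projective resolution of $M$ to one of $FM$, and the adjunction is then applied termwise.
\end{enumerate}

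Compared with the citation, your version makes clear that the $\Tor$ hypothesis is used only in (2). Part (1) needs nothing beyond the invertible unit and right exactness of $F$.

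The reflection step can be shortened. Write the section of $Fp$ as $F(t)$ using fullness; then $F(pt)=F(1_M)$, and faithfulness gives $pt=1_M$.

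You were right to prefer this to the \emph{summand of $(\Gamma e)^n$} route. That route only works after an extra argument: since $FM=(FM)e\Gamma$, a projective $FM$ lies in $\add e\Gamma$, so $(FM)e\in\add A$.
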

\begin{proof}
See \cite[Lemma~3.1(1),(3) and pp.~678--679]{APT92} for (1), and
\cite[Theorem~3.2(c) and Proposition~3.7(b)]{APT92} for (2).
\end{proof}

\begin{lemma}\label{lem:yoneda}
Let $V$ be an $S$-module and $B=\End_S(V)$. Then
\[
\Hom_S(V,-):\add V\longrightarrow\add B
\]
is an equivalence. 
\end{lemma}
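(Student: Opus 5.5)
This is the classical projectivization argument, and I would carry it out in four steps.

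\emph{Step 1: the functor.} Put $G=\Hom_S(V,-)$. For $X\in\add V$ the space $G(X)$ is a right $B$-module via precomposition. We have $G(V)=B_B$, and $G$ is additive. Hence $G$ sends $V^n$ to $B^n$, and it sends direct summands of $V^n$ to direct summands of $B^n$. So $G$ maps $\add V$ into $\add B$.

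\emph{Step 2: full faithfulness.} Fix $X,Y\in\add V$. Both sides of the map
\[
G_{X,Y}:\Hom_S(X,Y)\to\Hom_B(G(X),G(Y))
\]
are additive in $X$, so it suffices to treat $X=V$. In that case
\[
\Hom_B(G(V),G(Y))=\Hom_B(B,G(Y))\cong G(Y)=\Hom_S(V,Y),
\]
where the isomorphism is evaluation at $1_V$. I will check that the composite of $G_{V,Y}$ with this evaluation is the identity: a map $f$ goes to $f\circ-$, and evaluating at $1_V$ returns $f$. Hence $G_{X,Y}$ is bijective for all $X$ in $\add V$.

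\emph{Step 3: density.} Let $P\in\add B$, so $P$ is a direct summand of $B^n=G(V^n)$. Then $P$ is the image of an idempotent $\varepsilon\in\End_B(B^n)$. By Step 2, $\varepsilon=G(e)$ for a unique $e\in\End_S(V^n)$. Faithfulness gives $e^2=e$, because $G(e^2)=\varepsilon^2=\varepsilon=G(e)$. Idempotents split in $\modu S$, so $e$ has an image $X$, which is a summand of $V^n$ and hence lies in $\add V$. Since $G$ is additive it preserves split idempotents. Therefore $G(X)\cong\operatorname{Im}\varepsilon=P$.

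\emph{Conclusion.} A fully faithful, dense functor is an equivalence. The only point that needs care is the density step, namely lifting the idempotent and using that idempotents split in both categories. This presents no real obstacle.
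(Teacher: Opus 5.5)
Your proof is correct: it is the standard projectivization argument, with full faithfulness reduced to $X=V$ via $\Hom_B(B,-)\cong\mathrm{id}$, and density obtained by lifting an idempotent of $\End_B(B^n)$ to $\End_S(V^n)$ and splitting it. The paper gives no argument of its own; it cites Proposition~II.2.1(c) of Auslander--Reiten--Smal\o{}, which is exactly this classical result, so your route is essentially the same.
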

\begin{proof}
See for example \cite[Proposition~II.2.1(c), p.~33]{ARS95}.
\end{proof}
For the next Proposition, we recall
 M\"uller's formula \cite[Lemma~3, pp.~401--402]{Mue68} that for a generator-cogenerator $V$ we have:
\begin{equation}\label{eq:mueller-form}
\domdim\End_S(V)
 =1+\inf\{i\geq1:\Ext_S^i(V,V)\ne0\},
\qquad\inf\varnothing=\infty.
\end{equation}
\begin{proposition}\label{prop:endExt}
Let $S$ be an algebra, let $V=S\oplus W$ be an $S$-module and put $B=\End_S(V)$. If $V$ is selforthogonal, then there are isomorphisms
\begin{equation}\label{eq:endExt}
\Ext_B^n(DB,B)\cong\Ext_S^n(\nu_S V,V)
\qquad(n\geq0).
\end{equation}
If $V$ is also a cogenerator, then $\domdim B=\infty$.
\end{proposition}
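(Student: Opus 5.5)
Set $P=\Hom_S(V,-)$ and let $Q=P(V)=B_B$. The plan is to compute both $DB$ and a projective resolution of it via $P$, and then read off $\Ext_B^n(DB,B)$ as cohomology of a complex in $\modu S$.

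\textbf{Step 1: identify $DB$.} Since $S\in\add V$, $V$ is a generator, so $\Hom_S(V,-)$ is fully faithful on all of $\modu S$. Moreover
\[
DB=D\Hom_S(V,V)\cong\Hom_S(V,\nu_S V).
\]
This comes from the standard isomorphism $D\Hom_S(X,Y)\cong\Hom_S(Y,\nu_S X)$, valid when $X$ is projective, extended additively. The catch is that $V$ is not projective, so the formula needs care. Instead, use $\nu_S=-\otimes_S DS$ and the tensor-hom identity. More precisely, for $X=S$ we have $D\Hom_S(S,Y)\cong\Hom_S(Y,DS)$. A cleaner route is therefore to write $V$ as $S\oplus W$ and use $DB\cong \Hom_S(V,DV')$ for a suitable module. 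I will try first to prove $DB\cong P(\nu_S V)$ as right $B$-modules directly by a natural map. If that fails, I will fall back to the following.

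\textbf{Step 2: resolve $\nu_S V$ and apply $P$.} Take an $\add V$-resolution of $\nu_S V$, or more simply use $V$ directly on the other side. Apply $\Hom_B(-,B)$ to a projective resolution $P(V_\bullet)\to P(X)$. By Lemma~\ref{lem:yoneda}, $\Hom_B(P(V_i),P(V))\cong\Hom_S(V_i,V)$. So $\Ext_B^n(P(X),B)$ is the cohomology of $\Hom_S(V_\bullet,V)$. The key claim is that, when $X$ has a minimal right $\add V$-approximation resolution $\cdots\to V_1\to V_0\to X\to0$ that stays exact under $P$ (which holds since $S\in\add V$ makes it exact as a complex in $\modu S$), this cohomology equals $\Ext_S^n(X,V)$. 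This uses selforthogonality: each $V_i$ is $\Hom_S(-,V)$-acyclic, so the $\add V$-resolution computes $\Ext_S^\ast(X,V)$ by the usual acyclic-resolution argument. Applying this with $X=\nu_S V$ and $P(X)\cong DB$ gives \eqref{eq:endExt}.

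\textbf{Step 3: the dominant dimension claim.} If $V$ is also a cogenerator, then $V$ is a selforthogonal generator-cogenerator, so Müller's formula \eqref{eq:mueller-form} gives $\domdim B=1+\infty=\infty$. This is immediate.

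\textbf{Main obstacle.} The obstacle is Step 1, the natural identification $DB\cong\Hom_S(V,\nu_S V)$, which in general fails for nonprojective $V$. I would first try to avoid it. Dually, $D\colon \Hom_B(DB,B)\cong \Hom_{B^{\op}}(DB,B)$, and one can compute $\Ext_B^n(DB,B)\cong\Ext_{B^{\op}}^n(DB,B)$ via left modules, where $DB\cong D\Hom_S(V,V)$ is treated through $V\otimes$. Concretely, I would use the exact sequence $0\to B\to \Hom_S(V,I^0)\to\Hom_S(V,I^1)$ coming from an injective copresentation of $V$, together with $\Hom_S(V,DS)\cong D(V)$-type identities, to get $DB\cong\Hom_S(V,\nu_S V)$ when $V$ is a generator. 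I expect the identity $\Hom_S(V,\nu_S V)\cong D\Hom_S(V,V)$ to hold precisely because $\nu_S V=V\otimes_S DS$ and $\Hom_S(V,V\otimes_S DS)\cong D(V\otimes\ldots)$ requires $V$ projective. So the real fix is likely to choose the correct module, and I would verify this on the summand $S$ first.
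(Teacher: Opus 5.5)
Your Step~3 is fine, and the acyclic-resolution computation in Step~2 is correct as far as it goes: for any $Y\in\modu S$ it gives $\Ext_B^n(\Hom_S(V,Y),B)\cong\Ext_S^n(Y,V)$. The gap is that this never reaches $DB$. Step~2 depends on the identification $DB\cong\Hom_S(V,\nu_SV)$, and that identification is false whenever $V$ is not projective. That is exactly the case that matters, e.g.\ $V=\Gamma\oplus X$ in Theorem~\ref{thm:main}.

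In fact $DB$ is not in the essential image of $\Hom_S(V,-)$ at all. Suppose $DB\cong\Hom_S(V,Y)$, and apply $\Hom_S(V,-)$ to an injective envelope $Y\hookrightarrow I^0$. This embeds $DB$ into $\Hom_S(V,I^0)\in\add\Hom_S(V,DS)=\add D(Be)$. The embedding splits because $DB$ is injective, so ${}_BB\in\add({}_BBe)$, i.e.\ $BeB=B$. Then $1_V=\sum_i b_i\iota\pi c_i$ factors through $S^n$, and $V$ is projective. So no choice of ``correct module'' rescues Step~1.

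The missing idea is that $\nu_SV$ enters through the left adjoint of $\Hom_S(V,-)$, not as a preimage of $DB$:
\[
DB\otimes_BV\cong(DB)e\cong D(eB)\cong D\Hom_S(V,S)=\nu_SV .
\]
The paper then resolves the \emph{second} variable. Applying $\Hom_S(V,-)$ to an injective resolution $V\to I^\bullet$ gives, by selforthogonality, an exact complex $0\to B\to\Hom_S(V,I^\bullet)$. Its terms lie in $\add D(Be)$ and are therefore injective. Adjunction then gives $\Hom_B(DB,\Hom_S(V,I^\bullet))\cong\Hom_S(\nu_SV,I^\bullet)$. The ingredients in your ``main obstacle'' paragraph are exactly these, but they should be used to build an injective resolution of $B_B$, not to identify $DB$.

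Alternatively, your own acyclic-resolution argument works if you run it in the other direction. Take a projective resolution $Q_\bullet\to DB$ over $B$. Since ${}_BV\cong{}_BBe$ is projective, $Q_\bullet\otimes_BV$ is an exact $\add V$-resolution of $\nu_SV$. Moreover $\Hom_B(Q_\bullet,B)\cong\Hom_S(Q_\bullet\otimes_BV,V)$, and this computes $\Ext_S^\ast(\nu_SV,V)$ because $\Ext_S^{>0}(V,V)=0$.
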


\begin{proof}
Write $H=\Hom_S(V,-)$, and let $e\in B$ be the idempotent corresponding to the summand $S$. There are bimodule isomorphisms
\begin{equation}\label{eq:cornerV}
{}_B V_S\cong {}_B Be_S,
\qquad {}_S eB_B\cong {}_S\Hom_S(V,S)_B.
\end{equation}
Here $eBe=\End_S(S)$ is identified with $S$ by left multiplication. More explicitly, if $\iota:S\to V$ and $\pi:V\to S$ are the inclusion and projection, then $e=\iota\pi$. The first map in \eqref{eq:cornerV} sends $be$ to $b\iota(1)$, and the second sends $eb$ to $\pi b$.

Choose an injective resolution
\[
0\longrightarrow V\longrightarrow I^0\longrightarrow I^1\longrightarrow\cdots.
\]
Since $\Ext_S^i(V,V)=0$ for $i>0$, applying $H$ gives an exact sequence
\begin{equation}\label{eq:Hresolution}
0\longrightarrow B\longrightarrow H(I^0)\longrightarrow H(I^1)\longrightarrow\cdots.
\end{equation}
There is a natural isomorphism of right $B$-modules
\[
H(DS)=\Hom_S(V,DS)\cong DV\cong D(Be).
\]
The first isomorphism sends $f$ to the functional $v\mapsto f(v)(1)$; its inverse sends $\lambda\in DV$ to the map $v\mapsto(s\mapsto\lambda(vs))$. Since $Be$ is projective as a left $B$-module, $D(Be)$ is injective as a right $B$-module. Thus all terms $H(I^j)$ in \eqref{eq:Hresolution} are injective.

For any $S$-module $I$, tensor--Hom adjunction gives
\begin{align*}
\Hom_B(DB,H(I))
&=\Hom_B\bigl(DB,\Hom_S(V,I)\bigr)\\
&\cong\Hom_S(DB\otimes_B V,I)\\
&\cong\Hom_S(DB\otimes_B Be,I)\\
&\cong\Hom_S((DB)e,I).
\end{align*}
Since $(DB)e \cong D(eB)$ and
\[
D(eB)
\cong D\Hom_S(V,S)=\nu_S V
\]
we can identify the entire cochain complexes
\[
\Hom_B(DB,H(I^\bullet))
\cong\Hom_S(\nu_S V,I^\bullet).
\]
The first complex computes $\Ext_B^n(DB,B)$ by \eqref{eq:Hresolution}; the second computes $\Ext_S^n(\nu_S V,V)$. This proves \eqref{eq:endExt}.

Finally, if $V$ is a cogenerator, then $\domdim B= \infty$ by \eqref{eq:mueller-form}, since $V$ is selforthogonal.
\end{proof}

\begin{lemma}\label{lem:splitgenerator}
Let $V=S\oplus W$ be a generator-cogenerator over an algebra $S$, and put $B=\End_S(V)$. If $B$ is selfinjective, then $V$ is injective. If $S$ and $B$ are selfinjective, then $V$ is projective-injective.
\end{lemma}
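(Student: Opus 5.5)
Since $V$ is a generator, $\Hom_S(V,-)$ restricted to $\add V$ is an equivalence onto $\add B = \operatorname{proj} B$ by Lemma~\ref{lem:yoneda}. The plan is to exploit that $V$ is also a cogenerator to identify the projective-injective $B$-modules. From the proof of Proposition~\ref{prop:endExt}, $\Hom_S(V,DS)\cong D(Be)$, and $D(Be)$ is injective. Thus for every injective $S$-module $I\in\add DS\subseteq\add V$, the projective $B$-module $\Hom_S(V,I)$ is injective.

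Now assume $B$ is selfinjective. Then every indecomposable projective $B$-module is injective, so the indecomposable projective-injective $B$-modules are exactly $\Hom_S(V,V_j)$ for the indecomposable summands $V_j$ of $V$. Count them. $B_B$ has as many isoclasses of indecomposable projectives as there are isoclasses of indecomposable summands of $V$, say $m$. The injective modules $D(Be)=\Hom_S(V,DS)$ give, via the equivalence of Lemma~\ref{lem:yoneda}, the indecomposable projective-injectives $\Hom_S(V,I)$ with $I$ an indecomposable injective $S$-module. Since $B$ is selfinjective, the Nakayama functor $\nu_B$ permutes the indecomposable projective $B$-modules. So I want to show that every indecomposable injective $B$-module lies in $\add D(Be)$, which would force every indecomposable summand $V_j$ to be injective over $S$.

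The key step is the following. Since $B$ is selfinjective, every indecomposable injective $B$-module is projective, hence of the form $\Hom_S(V,V_j)$, and its socle is simple. I would use the standard fact (Morita--Tachikawa) that $\Hom_S(V,-)$ identifies $\add DS$ with the projective-injective $B$-modules whenever $V$ is a generator-cogenerator. One way to see this: $\domdim B\geq 2$, and the minimal faithful projective-injective module of $B$ is $D(Be)$. Concretely, each indecomposable injective $B$-module $D(B\epsilon_j)$ (with $\epsilon_j$ the primitive idempotent of $V_j$) has socle the simple top of $B\epsilon_j$. Its injectivity together with being projective, $D(B\epsilon_j)\cong\Hom_S(V,V_k)$, shows that $\Hom_S(V,V_k)$ has simple socle. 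I would then compare with the fact that the socle of $B_B$ consists only of simples $\operatorname{top}(\epsilon' B)$ with $\epsilon'$ ranging over the idempotents of injective summands (dominant dimension $\geq1$, from $D(Be)$ being the injective hull of $B$). Selfinjectivity means $B$ has every injective module as a summand of its injective hull $H(I^0)\in\add D(Be)$. Hence all $\Hom_S(V,V_j)\in\add\Hom_S(V,DS)$, and by Lemma~\ref{lem:yoneda} each $V_j\in\add DS$. So $V$ is injective.

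For the second claim: if additionally $S$ is selfinjective, then injective $S$-modules are projective, so $V$ is projective-injective. The main obstacle is the cleanest argument that $B$ selfinjective forces $B\in\add H(DS)$. I would first try it directly via \eqref{eq:Hresolution}: the injective envelope $B\to H(I^0)$ splits because $B$ is injective. So $B_B$ is a summand of $H(I^0)\in\add H(DS)$, and the equivalence of Lemma~\ref{lem:yoneda} transports $V\in\add I^0$, giving injectivity of $V$. This also avoids the socle counting.
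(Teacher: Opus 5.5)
Your final paragraph is correct and is essentially the paper's proof. The map $B=H(V)\to H(I^0)$ is a monomorphism into an injective module, by left exactness of $H=\Hom_S(V,-)$ and because $I^0\in\add DS\subseteq\add V$. It splits since $B_B$ is injective, and Lemma~\ref{lem:yoneda} reflects the splitting, so $V$ is a summand of $I^0$; the projective-injective claim then follows exactly as in the paper.

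You can drop the socle-counting detour and the appeal to a Morita--Tachikawa ``standard fact'', which is essentially this lemma itself. Two small corrections: this map is only a monomorphism, not necessarily an injective envelope, and getting it does not need the selforthogonality behind \eqref{eq:Hresolution}.
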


\begin{proof}
Let $j:V\hookrightarrow I$ be an injective envelope. Since $V$ is a cogenerator, $I\in\add DS\subseteq\add V$. Applying $H=\Hom_S(V,-)$ gives a monomorphism $H(j):B\hookrightarrow H(I)$. As $B_B$ is injective, this has a retraction $r:H(I)\to B$. Lemma~\ref{lem:yoneda} gives $s:I\to V$ with $H(s)=r$. It follows that $H(sj)=1_{H(V)}$, and faithfulness gives $sj=1_V$. Thus $V$ is a direct summand of $I$ and is injective. Over a selfinjective algebra, finitely generated injective and projective modules coincide.
\end{proof}

\begin{corollary}\label{cor:criterion}
Let $S$ be selfinjective, let $V=S\oplus W$, and suppose that
\[
\Ext_S^i(V,V)=\Ext_S^i(\nu_S V,V)=0\qquad(i>0).
\]
If $\End_S(V)$ satisfies $(\TCone)$, then $V$ is projective.
\end{corollary}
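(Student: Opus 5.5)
Put $B=\End_S(V)$. The plan is to show that $B$ meets the hypothesis of $(\TCone)$, so that $B$ is selfinjective, and then to read off projectivity of $V$ from Lemma~\ref{lem:splitgenerator}.

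\emph{Step 1: $V$ is a generator-cogenerator.} By construction $V=S\oplus W$ is a generator. Since $S$ is selfinjective, $S_S$ is injective. Every indecomposable injective $S$-module is a summand of $DS$, and over a selfinjective algebra $DS\in\add S$. Hence $DS\in\add S\subseteq\add V$, so $V$ is also a cogenerator.

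\emph{Step 2: $B$ satisfies the hypothesis of $(\TCone)$.} The first vanishing assumption says that $V$ is selforthogonal. Proposition~\ref{prop:endExt} therefore applies and gives
\[
\Ext_B^n(DB,B)\cong\Ext_S^n(\nu_S V,V)\qquad(n\geq0).
\]
By the second vanishing assumption the right-hand side is zero for every $n>0$. So $\Ext_B^i(DB,B)=0$ for all $i>0$. Since $B$ satisfies $(\TCone)$, $B$ is selfinjective.

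\emph{Step 3: conclusion.} Now $V$ is a generator-cogenerator of the form $S\oplus W$, and both $S$ and $B$ are selfinjective. Lemma~\ref{lem:splitgenerator} then says that $V$ is projective-injective, and in particular projective.

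No step is a real obstacle. The only point needing care is the cogenerator property in Step 1, which is exactly what lets Lemma~\ref{lem:splitgenerator} apply. It rests on the standard fact that over a selfinjective Artin algebra the finitely generated projective and injective modules coincide, which was already used at the end of the proof of Lemma~\ref{lem:splitgenerator}.
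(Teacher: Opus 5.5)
Your proposal is correct and follows essentially the paper's route. Proposition~\ref{prop:endExt}, together with the second vanishing hypothesis, puts $\End_S(V)$ under $(\TCone)$ and makes it selfinjective; selfinjectivity of $S$ makes $V$ a generator-cogenerator; and Lemma~\ref{lem:splitgenerator} then gives that $V$ is projective.
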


\begin{proof}
Proposition~\ref{prop:endExt} and $(\TCone)$ make $\End_S(V)$ selfinjective. Since $S$ is selfinjective, $V$ is a generator-cogenerator. Now apply Lemma~\ref{lem:splitgenerator}.
\end{proof}

\section{Proof of the main result}\label{sec:extensions}

We first introduce $r$-fold trivial extension algebras (see for example \cite[Section 2.2]{CDIM25} for more information and \cite[Chapter 2]{Hap88} for the related repetitive algebra) and give their basic properties and then use them to prove our main result.

\begin{definition}\label{def:T3}
Let $r\geq2$. The \emph{$r$-fold trivial extension} of $A$ is
$T_r(A)=A^r\oplus(DA)^r$, with multiplication
\begin{equation}\label{eq:T3product}
(a_i,\varphi_i)_{i=1}^r(b_i,\psi_i)_{i=1}^r
=\bigl(a_i b_i,\ a_i\psi_i+\varphi_i b_{i+1}\bigr)_{i=1}^r.
\end{equation}
Indices are read modulo $r$. In particular,
\begin{equation}\label{eq:T3matrix}
T_2(A)=\begin{pmatrix}A&DA\\DA&A\end{pmatrix},
\qquad
T_3(A)=\begin{pmatrix}A&DA&0\\0&A&DA\\DA&0&A\end{pmatrix}.
\end{equation}
Products of two off-diagonal entries are defined to be zero.
\end{definition}

The matrix notation does not prescribe products of two
dual elements; such products are zero by definition.

Put $\Gamma=T_r(A)$ and let $e_i$ denote the $i$th diagonal identity.
These idempotents are pairwise orthogonal and sum to $1$.
Every corner $e_i\Gamma e_i$ is identified with the same algebra $A$.
$T_r(A)$ is an Artin $R$-algebra and
for a right $\Gamma$-module $Z$ one has the direct sum of $R$-modules
$Z=\bigoplus_{j=1}^r Ze_j$. We call $Ze_j$ its $j$th component.
Note that multiplication
by an element of $e_i\Gamma e_{i+1}=DA$ can send the $i$th component
to the $(i+1)$st. The functor $Z\mapsto Ze_j$ is exact, since taking
the image of an idempotent preserves exact sequences.

\begin{lemma}\label{lem:nakayama}
Define an automorphism $\rho$ of $\Gamma$ by
\[
\rho((a_i,\varphi_i)_i)=(a_{i-1},\varphi_{i-1})_i.
\]
Thus $\rho(e_i)=e_{i+1}$. The notation ${}_1\Gamma_\rho$ means
the underlying $R$-module $\Gamma$ with its usual left action and right
action $x\cdot b=x\rho(b)$. There is an isomorphism of $\Gamma$-bimodules
\begin{equation}\label{eq:DGamma}
D\Gamma\cong {}_1\Gamma_\rho.
\end{equation}
In particular, $\Gamma$ is selfinjective. If $Z_\rho$ denotes the right $\Gamma$-module with action $z\cdot a=z\rho(a)$, then
\begin{equation}\label{eq:nutwist}
\nu_\Gamma Z\cong Z_\rho
\end{equation}
naturally in $Z$.
\end{lemma}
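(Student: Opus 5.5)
We will write down an explicit nondegenerate pairing. The plan is to first produce a $\Gamma$-bimodule isomorphism $\Theta:{}_1\Gamma_\rho\to D\Gamma$ and then deduce everything else formally. The natural candidate uses the standard trace-like form of the ordinary trivial extension. For $x=(a_i,\varphi_i)_i$ and $y=(b_i,\psi_i)_i$, put
\[
\langle x,y\rangle=\sum_{i=1}^r\bigl(\varphi_i(b_{i+1})+\psi_i(a_{i+1})\bigr)\in E,
\]
and define $\Theta(x)=\langle x,-\rangle$ (possibly after composing with $\rho^{\pm1}$ on one side). The exact choice will be adjusted so that the twist comes out as $\rho$ and not $\rho^{-1}$.

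The key steps, in order, are as follows.

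\begin{enumerate}
\item Bijectivity. As $R$-modules, $D\Gamma\cong (DA)^r\oplus (D DA)^r\cong (DA)^r\oplus A^r$, using that $A\to DDA$ is an isomorphism for Artin algebras. The form above simply pairs the $A$-entries against the $DA$-entries with a shift of indices. So $\Theta$ is this duality isomorphism composed with a permutation of components, and is therefore bijective.
\item Left linearity. We check $\langle zx,y\rangle=\langle z, xy\rangle$-type associativity of the form, namely $\langle x y, w\rangle=\langle x, y w\rangle$. With this, $x\mapsto \langle x,-\rangle$ is a morphism of left modules for the action $(z\cdot f)(y)=f(yz)$ on $D\Gamma$. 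This is a direct expansion using \eqref{eq:T3product}: products of two dual entries vanish, so only terms of the form $\varphi(ab)$ survive. Associativity of $\varphi\mapsto\varphi(-)$ with respect to the bimodule structure of $DA$ then gives the equality.
\item Right twist. We compare $\langle x,y\rangle$ with $\langle y,\rho^{\pm1}(x)\rangle$. The form is not symmetric because of the index shift $i\mapsto i+1$: the component $\varphi_i\in e_i\Gamma e_{i+1}$ pairs with $b_{i+1}\in e_{i+1}\Gamma e_{i+1}$. We will show $\langle x,y\rangle=\langle \rho(y),x\rangle$ or the analogous identity with $\rho^{-1}$. Combined with step (2), this identity identifies the right action on $D\Gamma$ with $x\cdot b=x\rho(b)$.
\end{enumerate}

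The main obstacle is the index bookkeeping in step (3). The difficulty is arranging the pairing (for instance, pairing $e_i\Gamma e_{i+1}$ with $e_{i+1}\Gamma e_{i+1}$ versus $e_i\Gamma e_i$) so that the twist is exactly $\rho$ as defined, with $\rho(e_i)=e_{i+1}$. We will first test it on idempotents. We need $(D\Gamma)e_{j}\cong D(e_j\Gamma)$ to match $\Gamma\rho(e_j)=\Gamma e_{j+1}$ as left modules. Since $e_j\Gamma=e_j\Gamma e_j\oplus e_j\Gamma e_{j+1}=A\oplus DA$, and $\Gamma e_{j+1}=e_j\Gamma e_{j+1}\oplus e_{j+1}\Gamma e_{j+1}=DA\oplus A$, this fixes the convention. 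We then choose the form accordingly, replacing $\rho$ by $\rho^{-1}$ in the definition of $\Theta$ if needed.

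The remaining consequences are formal. Since $\rho$ is an automorphism, ${}_1\Gamma_\rho\cong\Gamma_\Gamma$ as right modules via $x\mapsto \rho^{-1}(x)$. Hence $D\Gamma\cong\Gamma$ as right modules, so $\Gamma_\Gamma$ is injective and $\Gamma$ is selfinjective. Finally,
\[
\nu_\Gamma Z\cong Z\otimes_\Gamma D\Gamma\cong Z\otimes_\Gamma{}_1\Gamma_\rho\cong Z_\rho,
\]
using $z\otimes x\mapsto zx$, which is natural in $Z$. This gives \eqref{eq:nutwist}.
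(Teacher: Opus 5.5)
Your overall plan is the paper's: an explicit nondegenerate associative form whose twisted symmetry is governed by $\rho$, followed by the formal deductions. Your step (1) reasoning and your final paragraph are fine. The gap is the pairing itself: it does not work, and it cannot be repaired in the way you suggest.

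Your form is symmetric, $\langle x,y\rangle=\langle y,x\rangle$, and it is not associative. Take $x=e_1$, and let $y$ have a single nonzero entry $\psi_1=\psi$ with $\psi(1)\neq0$. Then $\langle e_1,y\rangle=\psi(a_2)=0$, because $e_1$ has $a_2=0$, whereas $\langle 1,e_1y\rangle=\langle 1,y\rangle=\psi(1)\neq0$. This is no accident. A nondegenerate associative symmetric form would make $\Gamma$ a symmetric algebra, but the statement says $\nu_\Gamma$ is twisting by $\rho$, which sends $e_i\Gamma$ to $e_{i-1}\Gamma\not\cong e_i\Gamma$ for $r\geq2$.

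Composing with a power of $\rho$ on one side does not help. Suppose $\beta(x,y)=\langle x,\rho^k(y)\rangle$ were associative. Since $\langle 1,-\rangle=\ell$ with $\ell((a_i,\varphi_i)_i)=\sum_i\varphi_i(1)$, and $\ell\circ\rho=\ell$, you would get $\beta(x,y)=\beta(1,xy)=\ell(xy)=\sum_i\bigl(\varphi_i(b_{i+1})+\psi_i(a_i)\bigr)$. To pass from your form to this one, you must leave the $\varphi$--$b$ pairing alone and move only the $\psi$--$a$ pairing, from $\psi_i(a_{i+1})$ to $\psi_i(a_i)$. Twisting $y$ (or, similarly, $x$) by $\rho^k$ instead shifts both pairings by the same amount.

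Step (2) also has the sides interchanged. Associativity $\langle xy,w\rangle=\langle x,yw\rangle$ makes $x\mapsto\langle x,-\rangle$ linear for the \emph{right} action $(fb)(y)=f(by)$. Left linearity for $(zf)(y)=f(yz)$ would need the cyclic identity $\langle zx,y\rangle=\langle x,yz\rangle$, which is exactly what a non-symmetric algebra lacks.

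The missing idea is to derive the form from the single functional $\ell$ rather than write a pairing by hand. Put $\Theta(x)(y)=\ell(yx)=\sum_i\bigl(\varphi_i(b_i)+\psi_i(a_{i+1})\bigr)$. Then:
\begin{enumerate}
\item left linearity of $\Theta$ is just associativity of multiplication in $\Gamma$;
\item the only computation needed is the identity $\ell(y\rho(x))=\ell(xy)$, which gives $\Theta(x)b=\Theta(x\rho(b))$;
\item your step (1) argument still gives bijectivity, now pairing $\varphi_i$ with $b_i$ and $\psi_i$ with $a_{i+1}$.
\end{enumerate}
This is exactly the paper's proof. The idempotent test you describe would have exposed the problem had you run it on your form. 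For $x\in\Gamma e_{j+1}$, a bimodule map must satisfy $\Theta(x)=\Theta(x\rho(e_j))=\Theta(x)e_j$. So $\Theta(x)(y)$ may depend only on $e_jy$, which rules out your term $\varphi_j(b_{j+1})$.
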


\begin{proof}
The formula for multiplication immediately shows that $\rho$ is an algebra automorphism; its inverse shifts all indices backwards. Define the $R$-linear map $\ell:\Gamma\to E$ by
\[
\ell((a_i,\varphi_i)_i)=\sum_{i=1}^r\varphi_i(1).
\]
For $x=(a_i,\varphi_i)_i$ and $y=(b_i,\psi_i)_i$, the multiplication formula \eqref{eq:T3product} gives
\begin{align}
\ell(xy)
&=\sum_i\bigl(\psi_i(a_i)+\varphi_i(b_{i+1})\bigr),\label{eq:pairing}\\
\ell(y\rho(x))
&=\sum_i\bigl(\varphi_{i-1}(b_i)+\psi_i(a_i)\bigr)
=\ell(xy).\label{eq:nakidentity}
\end{align}
Consider the $R$-linear map
\[
\theta:\Gamma\longrightarrow D\Gamma,
\qquad \theta(x)(y)=\ell(yx).
\]
Suppose $\theta(x)=0$. In the formula
\[
\ell(yx)=\sum_i\bigl(\varphi_i(b_i)+\psi_i(a_{i+1})\bigr),
\]
first vary one $b_i$ at a time and then one $\psi_i$ at a time. This gives $\varphi_i=0$ and $a_{i+1}=0$ for all $i$, since $E$ is a cogenerator for finitely generated $R$-modules. Hence $\theta$ is injective. Since $D$ preserves finite $R$-length, $\theta$ is bijective.

The dual bimodule action on $D\Gamma$ is $(a f b)(y)=f(bya)$. Therefore
\[
a\theta(x)=\theta(ax),
\qquad
\theta(x)b=\theta(x\rho(b)).
\]
Thus $\theta$ identifies $D\Gamma$ with ${}_1\Gamma_\rho$, proving \eqref{eq:DGamma}. Since $D\Gamma$ is injective as a right module and twisting by an automorphism is an exact autoequivalence, $\Gamma$ is selfinjective. Finally,
\[
\nu_\Gamma Z\cong Z\otimes_\Gamma D\Gamma
\cong Z\otimes_\Gamma {}_1\Gamma_\rho\cong Z_\rho,
\]
which proves \eqref{eq:nutwist}.
\end{proof}

For $i=1,\ldots,r$, put $F_i=-\otimes_A e_i\Gamma$.
For $M\in\modu A$, the underlying $R$-module of $F_iM$
is $M\oplus(M\otimes_A DA)$, and multiplication by
$\gamma=(a_j,\varphi_j)_j$ is
\begin{equation}\label{eq:induced-action}
(m,n)\gamma=(ma_i,\ m\otimes\varphi_i+na_{i+1}).
\end{equation}
Thus the first summand sits at $e_i$ and the second at $e_{i+1}$.

\begin{lemma}\label{lem:components}
For every $M\in\modu A$ one has
\begin{equation}\label{eq:components}
(F_iM)e_i\cong M,\qquad
(F_iM)e_{i+1}\cong M\otimes_A DA,\qquad
(F_iM)e_j=0\quad(j\notin\{i,i+1\}).
\end{equation}
Moreover, there is a natural isomorphism
\begin{equation}\label{eq:nushift}
\nu_\Gamma(F_iM)\cong F_{i-1}M.
\end{equation}
\end{lemma}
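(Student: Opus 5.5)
The component formulas \eqref{eq:components} follow by computing $e_i\Gamma$ as a left $A$-module and then applying $M\otimes_A-$. From the multiplication rule \eqref{eq:T3product}, the row $e_i\Gamma$ consists of the elements with only the $i$th entries possibly nonzero, namely $(a_i,\varphi_i)$. Right multiplication by $e_j$ picks out the following pieces:
\begin{itemize}
\item $e_i\Gamma e_i=A$, coming from the diagonal entry;
\item $e_i\Gamma e_{i+1}=DA$, coming from the $\varphi_i$ entry, since $\varphi_i b_{i+1}$ lands in position $(i,i+1)$;
\item $e_i\Gamma e_j=0$ for all other $j$.
\end{itemize}
These are isomorphisms of $A$-$A$-bimodules. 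Since $\otimes$ commutes with the idempotent decomposition, we get $(F_iM)e_j=M\otimes_A e_i\Gamma e_j$. This gives $M$, $M\otimes_A DA$ and $0$ in the three cases, which is \eqref{eq:components}. It matches the explicit description \eqref{eq:induced-action}. When $r=2$ the indices $i+1$ and $i-1$ coincide, but they still differ from $i$, so nothing collides.

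For \eqref{eq:nushift}, Lemma~\ref{lem:nakayama} gives $\nu_\Gamma(F_iM)\cong (F_iM)_\rho$, so it suffices to produce a natural $\Gamma$-isomorphism $(F_iM)_\rho\cong F_{i-1}M$. I would do this at the level of bimodules. Twisting the right action, $(M\otimes_A e_i\Gamma)_\rho=M\otimes_A (e_i\Gamma)_\rho$, so it is enough to find an isomorphism of $A$-$\Gamma$-bimodules $(e_i\Gamma)_\rho\cong e_{i-1}\Gamma$.

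The candidate is the map $\rho^{-1}$ restricted to $e_i\Gamma$. It sends $e_i\Gamma=\rho(e_{i-1}\Gamma)$ bijectively onto $e_{i-1}\Gamma$. It is right $\Gamma$-linear in the required sense because
\[
\rho^{-1}(x\rho(b))=\rho^{-1}(x)\,b .
\]
Left $A$-linearity needs care. The left $A$-action on $e_i\Gamma$ is through the corner $e_i\Gamma e_i\cong A$, while on $e_{i-1}\Gamma$ it is through $e_{i-1}\Gamma e_{i-1}\cong A$. Since $\rho$ shifts indices, $\rho^{-1}$ carries the diagonal element $a$ at position $i$ to $a$ at position $i-1$. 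Under the paper's identification of all corners with the same $A$, this is the identity. Hence the map is an isomorphism of $A$-$\Gamma$-bimodules.

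Tensoring this isomorphism with $M$ gives the natural isomorphism $(F_iM)_\rho\cong F_{i-1}M$. Naturality in $M$ is automatic, because the isomorphism comes from a bimodule map.

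The main point to check is the consistency of the identifications. First, $\rho$ must preserve the off-diagonal $DA$-entries with their bimodule structure, so that $\varphi_i$ at $(i,i+1)$ goes to $\varphi_i$ at $(i-1,i)$ under $\rho^{-1}$; this is immediate from the formula for $\rho$. Second, the twist convention $x\cdot b=x\rho(b)$ must match the direction of the index shift. As a sanity check on components:
\[
(Z_\rho)e_j=Z\rho(e_j)=Ze_{j+1}.
\]
Applying this to $F_iM$, the twisted module has $M$ at position $i-1$ and $M\otimes DA$ at position $i$. This is exactly the component pattern of $F_{i-1}M$, which confirms the sign $i-1$.
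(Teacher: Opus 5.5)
Your proposal is correct and follows essentially the same route as the paper: both compute $e_i\Gamma\cong A\oplus DA$ to read off the components, and both use Lemma~\ref{lem:nakayama} to reduce \eqref{eq:nushift} to an isomorphism $(F_iM)_\rho\cong F_{i-1}M$. The difference is packaging. You obtain this isomorphism by tensoring $M$ with the $A$-$\Gamma$-bimodule isomorphism $\rho^{-1}\colon(e_i\Gamma)_\rho\to e_{i-1}\Gamma$, which makes naturality automatic. The paper instead checks by an explicit action computation on $M\oplus(M\otimes_A DA)$ that the identity map is a $\Gamma$-isomorphism, and verifies naturality by hand.
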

\begin{proof}
All indices are read modulo $r$. Put $N=M\otimes_A DA$.
For each index $k$, the only nonzero components of
$e_k\Gamma$ are
\[
e_k\Gamma e_k=A,
\qquad
e_k\Gamma e_{k+1}=DA.
\]
Thus $e_k\Gamma\cong A\oplus DA$ as a left $A$-module,
and we have canonical identifications of $R$-modules
\[
F_kM
=M\otimes_A e_k\Gamma
\cong (M\otimes_A A)\oplus(M\otimes_A DA)
\cong M\oplus N.
\]
Here the last identification uses $m\otimes a\mapsto ma$.
Under these identifications, \eqref{eq:induced-action} reads
\[
(m,n)\gamma
=
(ma_k,\ m\otimes\varphi_k+na_{k+1}),
\qquad
\gamma=(a_j,\varphi_j)_j\in\Gamma.
\]

Taking $k=i$ and multiplying by the diagonal idempotents gives
\[
(m,n)e_i=(m,0),
\qquad
(m,n)e_{i+1}=(0,n),
\qquad
(m,n)e_j=0
\quad\text{if }j\notin\{i,i+1\}.
\]
Moreover, the corner algebras $e_i\Gamma e_i=A$ and
$e_{i+1}\Gamma e_{i+1}=A$ act on these two components
by the usual right $A$-actions on $M$ and $N$, respectively.
This proves \eqref{eq:components} as a statement about
right $A$-modules.

For the Nakayama-functor assertion,
Lemma~\ref{lem:nakayama} gives a natural isomorphism
\[
\nu_\Gamma(F_iM)\cong(F_iM)_\rho.
\]
The twisted module $(F_iM)_\rho$ has the same underlying
$R$-module $M\oplus N$, but its action is
\[
(m,n)\mathbin{\star}\gamma=(m,n)\rho(\gamma).
\]
Since $\rho(\gamma)=(a_{j-1},\varphi_{j-1})_j$, the action
formula above yields
\[
(m,n)\mathbin{\star}\gamma
=
(ma_{i-1},\ m\otimes\varphi_{i-1}+na_i).
\]
This is exactly the action on $F_{i-1}M$, obtained by
taking $k=i-1$ in the same formula. Consequently, the
identity map on $M\oplus N$ is a $\Gamma$-module
isomorphism
\[
\alpha_M:(F_iM)_\rho\xrightarrow{\ \sim\ }F_{i-1}M.
\]

Finally, for any $A$-linear map $f:M\to M'$, both
$(F_i f)_\rho$ and $F_{i-1}f$ are represented under
these identifications by
\[
f\oplus(f\otimes_A\operatorname{id}_{DA}).
\]
Hence
\[
\alpha_{M'}\circ(F_i f)_\rho
=
F_{i-1}f\circ\alpha_M,
\]
so $\alpha_M$ is natural in $M$. Composing with the
natural isomorphism supplied by
Lemma~\ref{lem:nakayama} proves \eqref{eq:nushift}.
\end{proof}
\begin{proposition}\label{prop:transport}
Suppose $M\in\modu A$ satisfies $\Ext_A^j(M,A)=0$ for every $j>0$. Then for $i=1,\ldots,r$, every $Z\in\modu\Gamma$ and every $n\geq0$,
\begin{equation}\label{eq:transport}
\Ext_\Gamma^n(F_iM,Z)\cong\Ext_A^n(M,Ze_i).
\end{equation}
In particular, if $M$ is selforthogonal and $X=F_1M$, then
$\Ext_\Gamma^n(X,X)=0$ for every $n>0$. If also $r\geq3$, then
\begin{equation}\label{eq:twovanish}
\Ext_\Gamma^n(\nu_\Gamma X,X)=0\qquad(n\geq0).
\end{equation}
\end{proposition}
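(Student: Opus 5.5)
The plan is to apply Lemma~\ref{lem:induction}(2) with the idempotent $e=e_i$. Here $e_i\Gamma e_i=A$ and $F_i=-\otimes_A e_i\Gamma$, so the only thing to check is the Tor-vanishing hypothesis
\[
\Tor_n^A(M,e_i\Gamma)=0\quad(n>0).
\]
As computed in the proof of Lemma~\ref{lem:components}, we have $e_i\Gamma\cong A\oplus DA$ as a left $A$-module. Hence
\[
\Tor_n^A(M,e_i\Gamma)\cong\Tor_n^A(M,DA)\quad(n>0).
\]
By the standard duality $\Tor_n^A(M,DA)\cong D\Ext_A^n(M,A)$, this vanishes precisely because of the assumption $\Ext_A^j(M,A)=0$ for $j>0$. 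To justify the duality, take a projective resolution $P_\bullet\to M$ and use the natural isomorphism $P\otimes_A DA\cong D\Hom_A(P,A)$ for finitely generated projective $P$, together with exactness of $D$. Lemma~\ref{lem:induction}(2) then yields \eqref{eq:transport} for every $Z$ and every $n\ge0$.

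For the selforthogonality of $X=F_1M$, a selforthogonal $M$ automatically satisfies the hypothesis, since $A\in\add M$ is not assumed. So I will only use $\Ext_A^j(M,A)=0$, which is part of the hypothesis of the proposition, and apply \eqref{eq:transport} with $i=1$ and $Z=X$. By \eqref{eq:components} we get $Xe_1\cong M$ as right $A$-modules, so
\[
\Ext_\Gamma^n(X,X)\cong\Ext_A^n(M,M)=0\quad(n>0).
\]
One must make sure that the $A$-module structure on $Xe_1$ is the given one on $M$, and Lemma~\ref{lem:components} records exactly this.

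For \eqref{eq:twovanish}, I will use \eqref{eq:nushift} to rewrite $\nu_\Gamma X\cong F_0M=F_rM$, with indices modulo $r$. Applying \eqref{eq:transport} with $i=r$ and $Z=X$ gives
\[
\Ext_\Gamma^n(\nu_\Gamma X,X)\cong\Ext_A^n(M,Xe_r).
\]
By \eqref{eq:components}, $X=F_1M$ has nonzero components only at $e_1$ and $e_2$. When $r\ge3$ we have $r\notin\{1,2\}$, so $Xe_r=0$ and all these Ext groups vanish, including $n=0$.

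The only delicate point is the Tor-vanishing and Tor–Ext duality step, along with keeping track of which side $e_i\Gamma$ is a module on. Everything else is bookkeeping with Lemma~\ref{lem:components}. The $r\ge3$ hypothesis is used exactly to separate the index $r$ from $\{1,2\}$; for $r=2$ the component $Xe_2=M\otimes_A DA$ would survive.
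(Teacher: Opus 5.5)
Your proposal is correct and follows the paper's proof essentially step for step: Tor-vanishing from $e_i\Gamma\cong A\oplus DA$ and $D\Tor_n^A(M,DA)\cong\Ext_A^n(M,A)$, then Lemma~\ref{lem:induction}(2), then $Xe_1\cong M$, and finally $\nu_\Gamma X\cong F_rM$ with $Xe_r=0$ for $r\geq3$. One small slip: your aside that a selforthogonal $M$ ``automatically'' satisfies $\Ext_A^j(M,A)=0$ is false as written (presumably a missing ``not''), but it is harmless because you then correctly use the proposition's standing hypothesis.
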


\begin{proof}
As a left $A$-module, $e_i\Gamma\cong A\oplus DA$. Let $P_\bullet\to M$ be a projective resolution. Tensor--Hom adjunction and double duality identify the cochain complexes
\[
D(P_\bullet\otimes_A DA)
\cong\Hom_A(P_\bullet,DDA)
\cong\Hom_A(P_\bullet,A).
\]
The functor $D$ is exact and faithful, so
\[
D\Tor_j^A(M,DA)\cong\Ext_A^j(M,A)=0
\qquad(j>0).
\]
It follows that $\Tor_j^A(M,e_i\Gamma)=0$ for every $j>0$. Lemma~\ref{lem:induction} proves \eqref{eq:transport}.

Now let $X=F_1M$ and suppose that $M$ is selforthogonal. Since $Xe_1\cong M$, equation \eqref{eq:transport} gives
\[
\Ext_\Gamma^n(X,X)\cong\Ext_A^n(M,M)=0\qquad(n>0).
\]
Now assume $r\geq3$. Lemma~\ref{lem:components} gives $\nu_\Gamma X\cong F_rM$. Since $r$ differs from $1$ and $2$, we have $Xe_r=0$. Thus
\[
\Ext_\Gamma^n(\nu_\Gamma X,X)
\cong\Ext_\Gamma^n(F_rM,X)
\cong\Ext_A^n(M,Xe_r)=0
\qquad(n\geq0).
\]
This proves \eqref{eq:twovanish}.
\end{proof}

We first apply the two-fold extension to the second Tachikawa conjecture.
\begin{theorem}\label{thm:twofold}
Let $A$ be an Artin $R$-algebra. If $T_2(A)$ satisfies $(\TCtwo)$,
then $A$ satisfies $(\ARC)$. Consequently, global $(\TCtwo)$ implies
global $(\ARC)$.
\end{theorem}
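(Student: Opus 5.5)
Take $M\in\modu A$ with $\Ext_A^i(M,M\oplus A)=0$ for all $i>0$; we must show that $M$ is projective. Put $\Gamma=T_2(A)$. By Lemma~\ref{lem:nakayama}, $\Gamma$ is selfinjective, so $(\TCtwo)$ applies to $\Gamma$.

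Let $X=F_1M$. The hypothesis gives $\Ext_A^j(M,A)=0$ for $j>0$, and $M$ is selforthogonal. Proposition~\ref{prop:transport} (valid for $r=2$ except for \eqref{eq:twovanish}) therefore yields $\Ext_\Gamma^n(X,X)=0$ for all $n>0$. By $(\TCtwo)$ for $\Gamma$, the module $X$ is projective over $\Gamma$. Lemma~\ref{lem:induction}(1) says that $F_1$ reflects projective modules, so $M$ is projective over $A$. This proves $(\ARC)$ for $A$.

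For the global statement, assume $(\TCtwo)$ holds for all selfinjective Artin $R$-algebras. Since $T_2(A)$ is again an Artin $R$-algebra and is selfinjective, the first part applies to every $A$.

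The only real point is the transport of Ext-vanishing, and it is already contained in Proposition~\ref{prop:transport}. The Tor-vanishing $\Tor_j^A(M,e_1\Gamma)=0$ comes from $\Ext_A^j(M,A)=0$ via duality, and $Xe_1\cong M$. Note that the vanishing of $\Ext_A^j(M,A)$ is exactly what the $A$-summand in the ARC hypothesis supplies.

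I would double-check one thing in the proof of Proposition~\ref{prop:transport}: that it uses nothing about $r\geq3$ for the isomorphism \eqref{eq:transport} and for the selforthogonality of $X$. Only \eqref{eq:twovanish} needs $r\geq3$, which is why $T_2$ suffices here.
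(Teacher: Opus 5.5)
Your proposal is correct and is essentially the paper's own proof. You induce $M$ to $X=F_1M$ over the selfinjective algebra $T_2(A)$ and transport selforthogonality via Proposition~\ref{prop:transport}, whose isomorphism \eqref{eq:transport} indeed needs no hypothesis $r\geq3$; you then apply $(\TCtwo)$ and pull projectivity back along $F_1$ using Lemma~\ref{lem:induction}(1), exactly as the paper does.
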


\begin{proof}
Let $M$ satisfy $\Ext_A^i(M,M\oplus A)=0$ for every $i>0$.
Put $\Gamma=T_2(A)$ and $X=M\otimes_A e_1\Gamma$.
The algebra $\Gamma$ is selfinjective by Lemma~\ref{lem:nakayama}.
Proposition~\ref{prop:transport} gives
\[
\Ext_\Gamma^i(X,X)\cong\Ext_A^i(M,M)=0\qquad(i>0).
\]
Thus $(\TCtwo)$ for $\Gamma$ implies that $X$ is projective.
By Lemma~\ref{lem:induction}, $M$
is projective. 
\end{proof}

The three-fold extension gives the corresponding implication for the first Tachikawa conjecture.
\begin{theorem}\label{thm:main}
Let $A$ be an algebra and $M\in\modu A$ satisfy
\[
\Ext_A^i(M,M\oplus A)=0\qquad(i>0).
\]
Set $\Gamma=T_3(A)$, $X=M\otimes_A e_1\Gamma$ and
\[
B_M=\End_\Gamma(\Gamma\oplus X).
\]
Then $\domdim B_M=\infty$ and $\Ext_{B_M}^i(DB_M,B_M)=0$ for every $i>0$. If $B_M$ satisfies $(\TCone)$, then $M$ is projective. Consequently, global $(\TCone)$ implies global $(\ARC)$.
\end{theorem}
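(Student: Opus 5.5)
The plan is to assemble the statement from Proposition~\ref{prop:transport}, Proposition~\ref{prop:endExt}, Corollary~\ref{cor:criterion} and Lemma~\ref{lem:induction}, with $S=\Gamma=T_3(A)$ and $V=\Gamma\oplus X$.

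First, $\Gamma$ is selfinjective by Lemma~\ref{lem:nakayama}, so $V$ is a generator-cogenerator. Next I check that $V$ is selforthogonal. This reduces to four vanishing statements in degrees $i>0$:
\begin{itemize}
\item $\Ext^i_\Gamma(\Gamma,-)=0$, since $\Gamma$ is projective;
\item $\Ext^i_\Gamma(-,\Gamma)=0$, since $\Gamma$ is injective;
\item $\Ext^i_\Gamma(X,X)=0$, which is Proposition~\ref{prop:transport} applied to the selforthogonal $M$ with $\Ext^j_A(M,A)=0$.
\end{itemize}
Proposition~\ref{prop:endExt} then gives $\domdim B_M=\infty$, using the cogenerator property, and also
\[
\Ext^i_{B_M}(DB_M,B_M)\cong\Ext^i_\Gamma(\nu_\Gamma V,V).
\]

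The remaining vanishing is that of $\Ext^i_\Gamma(\nu_\Gamma V,V)$ for $i>0$, where $\nu_\Gamma V=\nu_\Gamma\Gamma\oplus\nu_\Gamma X\cong D\Gamma\oplus\nu_\Gamma X$. I handle it term by term.
\begin{itemize}
\item $D\Gamma$ is projective because $\Gamma$ is selfinjective, so the terms with first argument $D\Gamma$ vanish.
\item $\Ext^i_\Gamma(\nu_\Gamma X,\Gamma)=0$ because $\Gamma$ is injective.
\item $\Ext^i_\Gamma(\nu_\Gamma X,X)=0$ for all $i\ge 0$ by \eqref{eq:twovanish}, which needs $r=3$. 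This is the point of using the three-fold rather than the two-fold extension: $\nu_\Gamma X\cong F_3M$ has its top in component $3$, while $X$ lives in components $1$ and $2$.
\end{itemize}
This establishes the two asserted properties of $B_M$.

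Now suppose $B_M$ satisfies $(\TCone)$. The hypotheses of Corollary~\ref{cor:criterion} hold by the previous two steps, so $V$ is projective. Hence its summand $X=F_1M$ is projective, and Lemma~\ref{lem:induction}(1), where $F_1$ reflects projectives, gives that $M$ is projective.

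For the global statement, if $(\TCone)$ holds for all Artin $R$-algebras, apply it to $B_M$. This is legitimate because $B_M$ is again an Artin $R$-algebra: $\Gamma$ is finite over $R$, hence so is the endomorphism ring of a finitely generated module. It follows that every $M$ satisfying the ARC hypothesis is projective.

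There is no real obstacle beyond bookkeeping. The only delicate point is making sure every summand pairing in $\Ext(\nu V,V)$ is covered, in particular the pairing with first argument $\nu X$ and second argument $X$, where $r\ge3$ is essential.
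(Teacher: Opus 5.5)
Your proposal is correct and follows essentially the same route as the paper. You verify $\Ext^i_\Gamma(V,V)=\Ext^i_\Gamma(\nu_\Gamma V,V)=0$ summand by summand using Proposition~\ref{prop:transport}, with $r=3$ needed for the $(\nu_\Gamma X,X)$ pairing. You then apply Proposition~\ref{prop:endExt} and conclude via Lemma~\ref{lem:splitgenerator} and Lemma~\ref{lem:induction}(1). The differences are cosmetic: invoking Corollary~\ref{cor:criterion} merely packages the Proposition~\ref{prop:endExt}/Lemma~\ref{lem:splitgenerator} step that the paper performs directly, and you state explicitly that $B_M$ is again an Artin $R$-algebra, which the paper leaves implicit.
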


\begin{proof}
By Lemma~\ref{lem:nakayama}, $\Gamma$ is selfinjective. Proposition~\ref{prop:transport} gives
\[
\Ext_\Gamma^i(X,X)=\Ext_\Gamma^i(\nu_\Gamma X,X)=0
\qquad(i>0).
\]
Put $V=\Gamma\oplus X$. The mixed summands with $\Gamma$ in $\Ext_\Gamma^i(V,V)$ vanish because $\Gamma$ is projective and injective. Similarly, in $\Ext_\Gamma^i(\nu_\Gamma V,V)$ the summands with $\nu_\Gamma\Gamma=D\Gamma$ in the first variable vanish because $D\Gamma$ is projective, and the summands with $\Gamma$ in the second variable vanish because $\Gamma$ is injective. Hence
\[
\Ext_\Gamma^i(V,V)=\Ext_\Gamma^i(\nu_\Gamma V,V)=0
\qquad(i>0).
\]
Proposition~\ref{prop:endExt} now gives
\[
\Ext_{B_M}^i(DB_M,B_M)
\cong\Ext_\Gamma^i(\nu_\Gamma V,V)=0\qquad(i>0).
\]
Since $\Gamma$ is selfinjective, $V$ is a generator-cogenerator, and the last assertion of Proposition~\ref{prop:endExt} also gives $\domdim B_M=\infty$.
If $B_M$ satisfies $(\TCone)$, then it is selfinjective. Lemma~\ref{lem:splitgenerator}, applied to the generator-cogenerator $V$ over $\Gamma$, implies that $V$ is projective. Therefore $X$ is projective. Finally, $F_1$ reflects projectivity by Lemma~\ref{lem:induction}, so $M$ is projective. 
\end{proof}
We give now the proof of our first main theorem:
\begin{theorem}
Fix a commutative artinian ring $R$. The following conjectures are
globally equivalent for Artin $R$-algebras:
\begin{enumerate}
\item the Auslander--Reiten conjecture;
\item the generalised Nakayama conjecture;
\item the Nakayama conjecture;
\item the first Tachikawa conjecture;
\item the second Tachikawa conjecture.
\end{enumerate}
\end{theorem}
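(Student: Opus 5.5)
The plan is to close the cycle of global implications using the classical results recalled in the introduction together with Theorems~\ref{thm:twofold} and~\ref{thm:main}. The chain will be
\[
\ARC\Longleftrightarrow\mathrm{GNC}\Longrightarrow\NC\Longrightarrow\TCone,\quad \NC\Longrightarrow\TCtwo,\quad \TCone\Longrightarrow\ARC,\quad \TCtwo\Longrightarrow\ARC,
\]
and each arrow is meant globally over the fixed ring $R$.

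The first step takes $\ARC\Leftrightarrow\mathrm{GNC}$ from Auslander--Reiten~\cite{AR75}. The second is $\mathrm{GNC}\Rightarrow\NC$, which is standard: if $\domdim A=\infty$, every term $I^j$ of the minimal injective coresolution of $A$ is projective. Every indecomposable injective, hence every simple module (via its injective envelope), then occurs as a summand of some $I^j$. Using that each simple $S$ has finite grade $g$, one gets $\Hom(S,I^g)$-type contradictions with minimality unless $A$ is injective. Alternatively, I would just cite \cite[p.~410]{ARS95} and \cite{Yam96} for these arrows, as the paper does in its diagram. The third step is $\NC\Rightarrow\TCone$ and $\NC\Rightarrow\TCtwo$, which are the classical results of M\"uller and Tachikawa~\cite{Mue68,Tac73}. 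For $\TCone$ this is essentially direct: if $\Ext^i_A(DA,A)=0$ for all $i>0$, then $\domdim A=\infty$. For $\TCtwo$, a selforthogonal $M$ over a selfinjective $A$ gives $\End_A(A\oplus M)$ of infinite dominant dimension by \eqref{eq:mueller-form}. $\NC$ then makes this endomorphism algebra selfinjective, and Lemma~\ref{lem:splitgenerator} yields that $M$ is projective.

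The new arrows come from the two preceding theorems. Assume global $\TCtwo$. For any $A$ and any $M$ with $\Ext^i_A(M,M\oplus A)=0$, the algebra $T_2(A)$ is an Artin $R$-algebra over the same $R$, so it satisfies $\TCtwo$, and Theorem~\ref{thm:twofold} gives that $M$ is projective. Hence global $\ARC$ holds. Similarly, assume global $\TCone$. Then $B_M$ is an Artin $R$-algebra, being the endomorphism ring of a finitely generated module over the Artin $R$-algebra $T_3(A)$, so it satisfies $\TCone$. Theorem~\ref{thm:main} then forces $M$ to be projective, which gives global $\ARC$. Combining the arrows, all five conjectures lie on a common cycle and are therefore globally equivalent.

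The main obstacle is bookkeeping rather than mathematics. One must check that every auxiliary algebra ($T_2(A)$, $T_3(A)$, $B_M$) is again an Artin algebra over the same fixed $R$, so that the global hypothesis really applies to it. This holds because each is finitely generated as an $R$-module. One must also be sure that the classical arrows $\mathrm{GNC}\Rightarrow\NC\Rightarrow\TCone,\TCtwo$ hold over a fixed $R$. They do, since $\NC\Rightarrow\TCtwo$ only passes to $\End_A(A\oplus M)$, which is again an Artin $R$-algebra.
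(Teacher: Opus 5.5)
Your cycle of implications is the paper's. The classical arrows $\ARC\Rightarrow\mathrm{GNC}\Rightarrow\NC\Rightarrow\TCone,\TCtwo$ are quoted from the literature. The two new arrows come from Theorem~\ref{thm:twofold} (via $T_2(A)$) and Theorem~\ref{thm:main} (via $B_M$ over $T_3(A)$), each applied to an auxiliary algebra that is again an Artin $R$-algebra. As long as you rely on the citations for the classical arrows, your proof is complete.

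The justification you sketch for $\NC\Rightarrow\TCone$ is wrong, however. You claim that $\Ext_A^i(DA,A)=0$ for all $i>0$ directly forces $\domdim A=\infty$. This is not known. Take $A$ local and not selfinjective. Then $\domdim A=0$, because a projective injective envelope of $A$ would be a direct sum of copies of $A$ and so would make $A$ injective. Your claim would therefore prove $\TCone$ for every local algebra, including commutative artinian local rings, where it is open; the introduction points out exactly this gap between $\NC$ and the Tachikawa questions for local algebras. This is why the implication is only global: $\NC$ has to be applied to an auxiliary algebra, not to $A$. The fix is the template you already use for $\TCtwo$. Put $V=A\oplus DA$; then $\Ext_A^i(V,V)=\Ext_A^i(DA,A)=0$ for $i>0$, so $\domdim\End_A(V)=\infty$ by \eqref{eq:mueller-form}. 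Now $\NC$ makes $\End_A(V)$ selfinjective, and Lemma~\ref{lem:splitgenerator} makes $V$, hence $A$, injective.

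Your sketch of $\mathrm{GNC}\Rightarrow\NC$ is also muddled: it does not end in a contradiction with minimality. GNC together with $\Ext_A^g(S,A)\cong\Hom_A(S,I^g)$ shows that every indecomposable injective is a summand of some term $I^g$, and these terms are projective when $\domdim A=\infty$. Counting isoclasses of indecomposable projectives against indecomposable injectives then shows that every indecomposable projective is injective.
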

\begin{proof}
That (1) implies (2), (2) implies (3) and (3) implies (4) and (5) is well known, see for example \cite[Theorem 3.4.3]{Yam96}. In theorem \ref{thm:twofold} we proved that (5) implies (1) and in theorem \ref{thm:main} we proved that (4) implies (1) and thus all equivalences are proven.

\end{proof}

\section{Tilting equivalences and the Chen--Xi conjecture}
\label{sec:tilting}

In this section algebras are finite dimensional over a fixed field $K$,
and $D=\Hom_K(-,K)$.

A \emph{Morita algebra} is an algebra of the
form $\End_S(V)$, where $S$ is selfinjective and $V$ is a
generator-cogenerator; see \cite{KY13}.
A \emph{tilting module} $U_A$ has finite projective dimension
$n=\pdim_AU$, satisfies $\Ext_A^i(U,U)=0$ for all $i>0$, and admits
an exact sequence
\[
0\longrightarrow A\longrightarrow U_0\longrightarrow\cdots
\longrightarrow U_n\longrightarrow0,
\qquad U_j\in\add U.
\]
We write $\Db(\modu A)$ for the bounded derived category. For background on derived categories and tilting theory, see Krause~\cite{Kra22} and Zimmermann~\cite{Zim14}.
For a tilting module $U_A$, put $B=\End_A(U)$, so ${}_BU_A$ is a bimodule.
The tilting theorem gives quasi-inverse equivalences
\[
\mathbf R\Hom_A(U,-):\Db(\modu A)\longrightarrow\Db(\modu B),
\qquad
-\otimes_B^{\mathbf L}U:\Db(\modu B)\longrightarrow\Db(\modu A).
\]

We recall the uniqueness part of the Morita--Tachikawa correspondence needed below; see \cite[Theorem~2 and the following remark, p.~401]{Mue68} and \cite[Theorem~2]{Cru22}.
Every algebra $A$ with $\domdim A\geq2$ has a presentation
$A\cong\End_S(V)$ with $V$ a generator-cogenerator, and $S$ is
determined up to Morita equivalence.
Moreover, M\"uller's formula \cite[Lemma~3, pp.~401--402]{Mue68} gives
\begin{equation}\label{eq:mueller-field}
\domdim\End_S(V)
 =1+\inf\{i\geq1:\Ext_S^i(V,V)\ne0\},
\qquad\inf\varnothing=\infty.
\end{equation}
We also note that $\domdim A=\domdim A^{\op}$
\cite[Theorem~4, pp.~402--403]{Mue68}.
For a module $N$, write $\Omega_A^{-1}N$ for the cokernel of its
embedding into an injective envelope. The following construction is
the first canonical tilting module considered in \cite{CX16}. For the convenience of the reader we give the elementary proof.
\begin{lemma}\label{lem:canonical}
Suppose $\domdim A\geq1$, denote by $I=I^0(A)$ the injective envelope of $A$ and $C=\Omega_A^{-1}A$.
Then $U=I\oplus C$ is a tilting module of projective dimension at
most one. If $A$ is not selfinjective, its projective dimension is one.
\end{lemma}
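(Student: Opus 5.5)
We work with the exact sequence $0\to A\to I\to C\to 0$. Since $\domdim A\geq1$, the module $I$ is projective-injective, so this sequence is a projective resolution of $C$ of length at most one. It also gives the coresolution $0\to A\to I\to C\to0$ of $A$ by modules in $\add U$. Hence $\pdim_A U\leq1$, and the third tilting axiom holds with $n=1$.

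The remaining axiom is selforthogonality, which here reduces to $\Ext^1_A(U,U)=0$.
\begin{itemize}
\item[] Every $\Ext^1_A(I,-)$ vanishes because $I$ is projective, and every $\Ext^1_A(-,I)$ vanishes because $I$ is injective.
\item[] So the only term to check is $\Ext^1_A(C,C)$. Apply $\Hom_A(-,C)$ to $0\to A\to I\to C\to0$. This gives the exact sequence
\[
\Hom_A(I,C)\longrightarrow\Hom_A(A,C)\longrightarrow\Ext^1_A(C,C)\longrightarrow\Ext^1_A(I,C)=0 .
\]
\item[] Therefore it suffices to show that every map $A\to C$ extends along $A\hookrightarrow I$.
\end{itemize}

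A map $A\to C$ is determined by an element $c=\pi(x)$ with $x\in I$, where $\pi\colon I\to C$ is the projection. The key observation is that $A\to C$ factors as $A\to I\xrightarrow{\pi}C$, using the map $A\to I$ sending $1\mapsto x$. So we need $A\to I$, $1\mapsto x$, to extend to $I\to I$. This holds simply because $I$ is injective and $A\hookrightarrow I$ is a monomorphism. Composing that extension with $\pi$ gives the required extension $I\to C$. Thus $\Ext^1_A(C,C)=0$ and $U$ is tilting.

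This step looked like the main obstacle, but it is resolved by injectivity of $I$ alone. No dominant-dimension-two hypothesis is needed.

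For the final claim, suppose $\pdim_A U=0$. Then $C$ is projective, so the sequence splits, and $A$ is a summand of the injective module $I$. Hence $A$ is selfinjective. Contrapositively, if $A$ is not selfinjective, then $\pdim_A U=1$.
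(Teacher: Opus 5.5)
Your proof is correct and follows the same outline as the paper's: the same exact sequence $0\to A\to I\to C\to 0$, the same reduction to $\Ext^1_A(C,C)=0$, and the same splitting argument for the final claim. The only difference is how you get $\Ext^1_A(C,C)=0$: you use the long exact sequence in the first variable and show $\Hom_A(I,C)\to\Hom_A(A,C)$ is surjective by lifting through $\pi$ and extending via injectivity of $I$, whereas the paper dimension-shifts in the second variable, $\Ext^1_A(C,C)\cong\Ext^2_A(C,A)=0$, using $\pdim_A C\le 1$.
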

\begin{proof}
The defining exact sequence
\[
0\longrightarrow A\longrightarrow I\longrightarrow C\longrightarrow0
\]
has $I$ projective, so $\pdim_A C\leq1$.
Injectivity of $I$ gives $\Ext_A^1(C,I)=0$. We have
$\Ext_A^1(C,C)\cong \Ext_A^1(C,\Omega^{-1}(A)) \cong \Ext_A^2(C,A)=0$.
Projectivity of $I$ and $\pdim_A C\leq1$ now imply
$\Ext_A^i(U,U)=0$ for all $i>0$. The displayed sequence has its
last two terms in $\add U$, and hence supplies the required tilting
coresolution. If $\pdim_A U=0$, then $C$ is projective, the sequence
splits, and $A$ is a direct summand of the injective module $I$.
Thus $A$ is selfinjective. Conversely, when $A$ is selfinjective,
$I=A$, $C=0$, and $U=A$ is a tilting module of projective dimension zero.
\end{proof}

We use the following consequences of Chen and Xi's results.
\begin{proposition}[Chen--Xi]\label{prop:chen-xi-input}
\begin{enumerate}
\item If $\domdim A\geq2$ and $I=I^0(A)$, then $A$ is a Morita
algebra if and only if $\add I=\add\nu_A I$.
\item If $\domdim A\geq2$ and $A$ is not a Morita algebra, the
canonical tilting module of Lemma~\ref{lem:canonical} satisfies
$\domdim\End_A(I\oplus\Omega_A^{-1}A)\leq1$.
\end{enumerate}
\end{proposition}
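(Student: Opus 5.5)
For part~(1) I would work entirely inside a Morita--Tachikawa presentation. Since $\domdim A\geq2$, write $A\cong\End_S(V)$ with $V=S\oplus W$ a generator-cogenerator. Let $e\in A$ be the idempotent belonging to the summand $S$. The argument splits into four steps.

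\begin{enumerate}
\item As in the proof of Proposition~\ref{prop:endExt}, one has $eA\cong\Hom_S(V,S)$ and $\Hom_S(V,DS)\cong DV\cong D(Ae)$. The latter is $\nu_A(eA)$, an injective module which is projective because $DS\in\add V$.
\item I would check that $\add I=\add\Hom_S(V,DS)$. Condition $\domdim A\geq1$ means that $A$ embeds in a projective-injective module. Applying $\Hom_S(V,-)$ to an injective envelope $V\hookrightarrow I_V$ with $I_V\in\add DS$ gives an embedding $A\hookrightarrow\Hom_S(V,I_V)$ which is left minimal. The projective-injective $A$-modules are exactly the $\Hom_S(V,J)$ with $J$ injective over $S$, by Lemma~\ref{lem:yoneda} together with the injective description above.
\item This gives $I\in\add\nu_A(eA)$, and in fact $\add I=\add\nu_A(eA)$. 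Since $\nu_A$ is an equivalence from projectives to injectives, I would argue as follows.
 \begin{itemize}
 \item If $\add eA=\add I$, then $\add\nu_AI=\add\nu_A(eA)=\add I$.
 \item Conversely, if $\add\nu_AI=\add I=\add\nu_A(eA)$, applying $\nu_A^{-1}$ gives $\add I=\add eA$.
 \end{itemize}
 So the condition $\add I=\add\nu_AI$ is equivalent to $\add\Hom_S(V,S)=\add\Hom_S(V,DS)$.
\item By Lemma~\ref{lem:yoneda} this is equivalent to $\add S=\add DS$, that is, to $S$ being selfinjective. Because $S$ is unique up to Morita equivalence and selfinjectivity is Morita invariant, this says precisely that $A$ is a Morita algebra.
\end{enumerate}

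For part~(2) I would put $U=I\oplus C$ with $C=\Omega_A^{-1}A$ and $B=\End_A(U)$, and argue by contradiction, assuming $\domdim B\geq2$. By Lemma~\ref{lem:yoneda}, $\Hom_A(U,-)$ identifies $\add U$ with the projective $B$-modules. For a projective $A$-module $P$ one has $D\Hom_A(P,U)\cong\Hom_A(U,\nu_AP)$, and since $\Ext^1_A(U,\nu_AP)=0$ I expect $\Hom_A(U,\nu_AI)$ to be an injective $B$-module. The first goal is to show that the projective-injective $B$-modules are exactly the $\Hom_A(U,Q)$ with $Q\in\add I\cap\add\nu_AI$, possibly up to injective summands of $C$, which must be controlled separately.

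Next, apply $\Hom_A(U,-)$ to $0\to A\to I\to C\to0$ and to the start $0\to A\to I\to I^1$ of the injective coresolution, where $I^1$ is projective-injective since $\domdim A\geq2$. This should give an explicit minimal injective copresentation of the summand $\Hom_A(U,C)$ of $B_B$, or of a suitable non-injective summand of $B_B$. The hypothesis $\domdim B\geq2$ then forces both $\nu_AI$ and $I^1$ to lie in $\add I$, which yields $\add\nu_AI=\add I$. By part~(1) this makes $A$ a Morita algebra, the desired contradiction.

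The main obstacle is this last step: pinning down the first two terms of the minimal injective coresolution of $B_B$, and showing that projectivity of the second term really forces $\nu_AI\in\add I$. I would first try to realise $\Hom_A(U,\nu_AI)$ as the injective envelope of the summand $\Hom_A(U,I)$, using $D\Hom_A(I,U)$ and the fact that $\Hom_A(I,C)\to\Hom_A(I,C)$ behaves well because $I$ is projective. If that fails, I would fall back on the derived equivalence of the tilting theorem, computing $\Ext_B^i(DB,B)$ and the Nakayama functor of $B$ via $\mathbf R\Hom_A(U,-)$, and then applying Müller's formula to a presentation of $B$.
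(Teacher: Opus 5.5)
\textbf{Part (1).} Your proof of part~(1) is correct. Unlike the paper, which only cites Chen--Xi for both parts, it is self-contained: identifying $\add I$ with $\add\nu_A(eA)=\add\Hom_S(V,DS)$ and then transporting $\add I=\add\nu_AI$ to $\add S=\add DS$ through Lemma~\ref{lem:yoneda} is sound.

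\textbf{Part (2) is not proved.} The decisive implication ``$\domdim B\geq2\Rightarrow\add\nu_AI=\add I$'' is exactly the step you leave as the main obstacle, and the classification of projective-injective $B$-modules is only announced. The $I^1$ half of your intended conclusion has no content, since $I^1$ is projective-injective and so lies in $\add I$ automatically. Your justification for the injectivity of $\Hom_A(U,\nu_AI)$ is also off. The vanishing $\Ext^1_A(U,\nu_AP)=0$ holds for every projective $P$. By contrast, $\Hom_A(U,\nu_AP)\cong D\Hom_A(P,U)$ is injective exactly when the left $B$-module $\Hom_A(P,U)$ is projective. That is true for $P\in\add I$ but false for every non-injective indecomposable projective $P$, and this failure is what part~(2) hinges on.

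\textbf{The missing idea.} Look at the summand $\Hom_A(U,\nu_AP)$ of $B_B$ and dualise the $\add U$-coresolution of $P$, rather than applying $\Hom_A(U,-)$ to the coresolution of $A$. Suppose $A$ is not Morita. Part~(1), together with a count of indecomposables ($\nu_A$ is a bijection from indecomposable projectives to indecomposable injectives), produces an indecomposable projective-injective $Q=\nu_AP$ with $P$ projective but not injective. Then $\Hom_A(U,Q)\cong D\Hom_A(P,U)$ is an indecomposable summand of $B_B$. The sequence $0\to P\to I^0(P)\to\Omega_A^{-1}P\to0$ is a direct summand of $0\to A\to I\to C\to0$, so its last two terms lie in $\add U$, and $\Ext^1_A(C,U)=0$. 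Since $P\to I^0(P)$ is left minimal, applying $D\Hom_A(-,U)$ gives a minimal injective coresolution
\[
0\to\Hom_A(U,Q)\to D\Hom_A(I^0(P),U)\to D\Hom_A(\Omega_A^{-1}P,U)\to0
\]
whose last term is nonzero. If that term were projective, the sequence would split, making $\Hom_A(U,Q)$ injective and contradicting minimality. Hence $\domdim B\leq1$.
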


\begin{proof}
Part~(1) follows from \cite[Lemmas~3.4(1) and~3.8]{CX16}.
Part~(2) is \cite[Proposition~4.17(1)]{CX16} for the first canonical
tilting module.
\end{proof}
As before, we denote by $\NC(A)$ the statement that the Nakayama conjecture is true for the algebra $A$, that is $\domdim A= \infty$ implies that $A$ is selfinjective.

\begin{theorem}\label{thm:tilting-note}
The following statements, quantified over all finite dimensional
$K$-algebras, are equivalent.
\begin{enumerate}[label=\textup{(\Alph*)}]
\item\label{cond:tilting}
For every tilting module $U_A$, with $B=\End_A(U)$, the statements
$\NC(A)$ and $\NC(B)$ are equivalent.
\item\label{cond:morita}
Every algebra of infinite dominant dimension is a Morita algebra.
\item\label{cond:tc1}
The first Tachikawa conjecture holds for every algebra.
\end{enumerate}
\end{theorem}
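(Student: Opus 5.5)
We will prove the cycle \ref{cond:tc1}$\Rightarrow$\ref{cond:tilting}$\Rightarrow$\ref{cond:morita}$\Rightarrow$\ref{cond:tc1}, throughout with $R=K$.

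\emph{\ref{cond:tc1}$\Rightarrow$\ref{cond:tilting}.} By Theorem~\ref{mainresult1} with $R=K$, global $(\TCone)$ is equivalent to global NC. Hence $\NC(A)$ and $\NC(B)$ are both true for every pair of algebras, and so they are trivially equivalent.

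\emph{\ref{cond:tilting}$\Rightarrow$\ref{cond:morita}.} Let $A$ have $\domdim A=\infty$, and suppose for a contradiction that $A$ is not a Morita algebra.
\begin{enumerate}
\item $A$ is not selfinjective, since a selfinjective $A\cong\End_A(A)$ with $A$ a generator-cogenerator is Morita. Thus $\NC(A)$ fails.
\item Since $\domdim A\geq 2$, Lemma~\ref{lem:canonical} gives the tilting module $U=I\oplus\Omega_A^{-1}A$.
\item Proposition~\ref{prop:chen-xi-input}(2) gives $\domdim\End_A(U)\leq1$. So $\NC(\End_A(U))$ holds vacuously.
\end{enumerate}
This contradicts \ref{cond:tilting}, so $A$ is Morita.

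\emph{\ref{cond:morita}$\Rightarrow$\ref{cond:tc1}.} This is the main step. By Theorem~\ref{mainresult1} it suffices to derive global $(\ARC)$. Suppose $(\ARC)$ fails: some $A$ has a nonprojective $M$ with $\Ext_A^i(M,M\oplus A)=0$ for $i>0$. Let $B=B_M$ be as in Theorem~\ref{thm:main}. That theorem gives $\domdim B=\infty$ and $\Ext_B^i(DB,B)=0$ for $i>0$.

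First, $B$ is not selfinjective. Otherwise Lemma~\ref{lem:splitgenerator} makes $\Gamma\oplus X$ projective, and then $M$ is projective by Lemma~\ref{lem:induction}.

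Now set $V=B\oplus DB$, a generator-cogenerator over $B$. We check that $V$ is selforthogonal:
\begin{enumerate}
\item $\Ext^i_B(B,-)=0$ for $i>0$, since $B$ is projective.
\item $\Ext^i_B(-,DB)=0$ for $i>0$, since $DB$ is injective.
\item $\Ext^i_B(DB,B)=0$ for $i>0$ by Theorem~\ref{thm:main}.
\end{enumerate}
By M\"uller's formula \eqref{eq:mueller-field}, $C=\End_B(V)$ therefore has $\domdim C=\infty$.

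By \ref{cond:morita}, $C$ is then a Morita algebra, say $C\cong\End_{S}(V')$ with $S$ selfinjective and $V'$ a generator-cogenerator. The uniqueness part of the Morita--Tachikawa correspondence makes $S$ Morita equivalent to $B$. Selfinjectivity is Morita invariant, so $B$ would be selfinjective, a contradiction. Hence $(\ARC)$ holds globally, and Theorem~\ref{mainresult1} gives global $(\TCone)$.

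The main obstacle is this last step: producing, from a failure of $(\ARC)$, an algebra of infinite dominant dimension that is provably not Morita. The doubling $\End_B(B\oplus DB)$ over the non-selfinjective algebra $B_M$, combined with Morita--Tachikawa uniqueness, is the device I would use. The points to verify carefully are that the uniqueness statement applies, which needs $\domdim C\geq2$ and holds here, and that $B_M$ is genuinely non-selfinjective.
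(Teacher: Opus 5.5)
Your proof is correct. Your steps (A)$\Rightarrow$(B) and (C)$\Rightarrow$(A) are essentially the paper's; the difference is in (B)$\Rightarrow$(C), where you take an unnecessary detour.

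All you actually use about $B_M$ is that it is a non-selfinjective algebra with $\Ext_{B}^i(DB,B)=0$ for $i>0$, i.e.\ a counterexample to $(\TCone)$. The doubling $\End_B(B\oplus DB)$ together with Morita--Tachikawa uniqueness works verbatim for any such algebra, and this is exactly the paper's argument. For an arbitrary $S$ with $\Ext_S^i(DS,S)=0$ ($i>0$), the algebra $\End_S(S\oplus DS)$ has infinite dominant dimension by M\"uller's formula and is Morita by (B). Uniqueness then makes $S$ selfinjective.

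So the passage through a failure of $(\ARC)$, Theorem~\ref{thm:main}, Lemma~\ref{lem:splitgenerator} and the implication $(\ARC)\Rightarrow(\TCone)$ only manufactures a counterexample to $(\TCone)$ that you could have started from directly. Your ``main obstacle'' disappears.

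The paper also proves (C)$\Rightarrow$(B) directly. It writes an algebra of infinite dominant dimension as $\End_S(V)$ with $V$ a selforthogonal generator-cogenerator, so $\Ext_S^i(DS,S)=0$ and $(\TCone)$ makes $S$ selfinjective. Hence (B)$\Leftrightarrow$(C) is elementary and independent of both the trivial-extension machinery and Chen--Xi, and Theorem~\ref{mainresult1} is needed only to reach (A).

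Your cycle instead routes even the easy direction (C)$\Rightarrow$(B) through Theorem~\ref{mainresult1} and Proposition~\ref{prop:chen-xi-input}(2). That is valid, but heavier than necessary.
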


\begin{proof}
First assume \ref{cond:tc1}. If $\domdim A=\infty$, choose
$A\cong\End_S(V)$ by Morita--Tachikawa with a generator-cogenerator $S$-module $V$. Formula~\eqref{eq:mueller-field} gives
$\Ext_S^i(V,V)=0$ for every $i>0$. Since $S,DS\in\add V$, it follows
that $\Ext_S^i(DS,S)=0$ for every $i>0$. By the first Tachikawa
conjecture, $S$ is selfinjective. Hence $A$ is Morita, proving
\ref{cond:morita}.

Conversely, assume \ref{cond:morita} and suppose
$\Ext_S^i(DS,S)=0$ for every $i>0$. Put $V=S\oplus DS$.
Projectivity of $S$, injectivity of $DS$, and the assumed vanishing
give $\Ext_S^i(V,V)=0$ for all $i>0$. Thus $A=\End_S(V)$ has infinite
dominant dimension by \eqref{eq:mueller-field}, and is Morita by
\ref{cond:morita}. It therefore also has a presentation as the
endomorphism algebra of a generator-cogenerator over a selfinjective
algebra. Uniqueness in the Morita--Tachikawa correspondence makes $S$
Morita equivalent to that selfinjective algebra. Since selfinjectivity
is Morita invariant, $S$ is selfinjective. This proves \ref{cond:tc1}.

Next assume \ref{cond:tilting}, and let $\domdim A=\infty$.
If $A$ is selfinjective it is already a Morita algebra, using the
presentation $A\cong\End_A(A)$. Otherwise,  the canonical tilting module $U=I^0(A)\oplus\Omega_A^{-1}A$ is of projective
dimension one. If $A$ were not Morita,
Proposition~\ref{prop:chen-xi-input}(2) would give
$\domdim\End_A(U)\leq1$.
Hence $\NC(\End_A(U))$ holds vacuously, and \ref{cond:tilting} gives
$\NC(A)$. As $\domdim A=\infty$, the algebra $A$ is then
selfinjective, and therefore Morita, a contradiction. This proves
\ref{cond:morita}.

Finally assume \ref{cond:morita}. We have proved that this implies
\ref{cond:tc1}. Theorem~\ref{mainresult1}, applied with $R=K$,
then gives the Nakayama conjecture for all finite dimensional
$K$-algebras.
Thus, for any tilting module $U_A$ and $B=\End_A(U)$, both $\NC(A)$
and $\NC(B)$ are true. In particular they are equivalent, proving
\ref{cond:tilting}.
\end{proof}

Chen and Xi formulated their conjecture for finite dimensional algebras
over a field \cite[\S5.2, conjecture~(1)]{CX16}; see also
\cite[Conjecture~6.5]{Xi18}. More precisely, their assertion is
\begin{equation}\label{eq:CX-conjecture}
\Db(\modu A)\simeq\Db(\modu B)
\quad\Longrightarrow\quad
\bigl(\domdim A<\infty\Longleftrightarrow\domdim B<\infty\bigr).
\end{equation}
We can now prove Theorem~\ref{mainresult2} in the following expanded form.

\begin{theorem}\label{thm:derived}
For finite dimensional algebras over a fixed field $K$, the following
statements are equivalent:
\begin{enumerate}
\item the Nakayama conjecture holds for every algebra;
\item the first Tachikawa conjecture holds for every algebra;
\item the Nakayama implication $\NC(A)$ is invariant under derived
      equivalences;
\item the Chen--Xi conjecture \eqref{eq:CX-conjecture} holds.
\end{enumerate}
Each is also equivalent to the conditions of
Theorem~\ref{thm:tilting-note}.
\end{theorem}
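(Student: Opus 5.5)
The plan is a cycle of implications, reusing Theorem~\ref{mainresult1} with $R=K$ and Theorem~\ref{thm:tilting-note}.

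\emph{(1) and (2).} These are equivalent by Theorem~\ref{mainresult1}. Condition (2) is condition~\ref{cond:tc1} of Theorem~\ref{thm:tilting-note}, so once the cycle below is closed, (1)--(4) are also equivalent to \ref{cond:tilting} and \ref{cond:morita}.

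\emph{(1) implies (3) and (4).} If NC holds for every algebra, then $\NC(A)$ is true for all $A$, so (3) holds trivially. For (4), suppose $\Db(\modu A)\simeq\Db(\modu B)$. Derived equivalence preserves selfinjectivity; this is classical, e.g.\ via stable equivalence (Rickard) or via the characterisation of selfinjectivity by $\nu$ preserving perfect complexes. Under NC, $\domdim A<\infty$ holds exactly when $A$ is not selfinjective, and the same is true for $B$. Hence $\domdim A<\infty \iff \domdim B<\infty$.

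\emph{(3) implies \ref{cond:tilting}.} If $U_A$ is tilting with $B=\End_A(U)$, the tilting theorem gives $\Db(\modu A)\simeq\Db(\modu B)$. Then (3) yields $\NC(A)\iff\NC(B)$. By Theorem~\ref{thm:tilting-note} this gives (2), and hence (1).

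\emph{(4) implies (1).} I would mimic the proof of \ref{cond:tilting}$\Rightarrow$\ref{cond:morita}. Let $\domdim A=\infty$ and suppose, for contradiction, that $A$ is not selfinjective. First suppose $A$ is not Morita. The canonical tilting module $U$ of Lemma~\ref{lem:canonical} gives $B=\End_A(U)$, derived equivalent to $A$, with $\domdim B\le 1<\infty$ by Proposition~\ref{prop:chen-xi-input}(2). This contradicts (4) directly. So every algebra of infinite dominant dimension is Morita, which is \ref{cond:morita}, and Theorem~\ref{thm:tilting-note} together with Theorem~\ref{mainresult1} gives (1).

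The only delicate point is the claim in (1)$\Rightarrow$(4) that selfinjectivity is derived invariant. I would cite it as a standard fact: over a field, $A$ is selfinjective iff $DA$ is perfect and $\nu$ restricts to an autoequivalence of $K^b(\mathrm{proj})$. If the paper prefers to avoid this, the argument can instead go (4)$\Rightarrow$(1)$\Rightarrow$(3) and (3)$\Rightarrow$(1). It still needs (1)$\Rightarrow$(4), so the invariance fact is the main input to verify.
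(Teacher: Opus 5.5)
Your cycle of implications is exactly the paper's. It uses Theorem~\ref{mainresult1} for (1)$\Leftrightarrow$(2), makes (1)$\Rightarrow$(3) trivial, and gets (3)$\Rightarrow$(A)$\Rightarrow$(2) from the tilting theorem. Derived invariance of selfinjectivity gives (1)$\Rightarrow$(4). Finally, (4)$\Rightarrow$(B)$\Rightarrow$(2)$\Rightarrow$(1) comes from the canonical tilting module of Lemma~\ref{lem:canonical} together with Proposition~\ref{prop:chen-xi-input}(2).

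The paper treats the one non-formal input, derived invariance of selfinjectivity, as a citation (it cites [XZ25, Corollary~5.3]). You should do the same, because both justifications you sketch for it are wrong.

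\begin{itemize}
\item Your criterion ``$DA$ perfect and $\nu$ restricts to an autoequivalence of $K^{\mathrm b}(\mathrm{proj}\,A)$'' does not characterise selfinjectivity. The derived Nakayama functor always maps $K^{\mathrm b}(\mathrm{proj}\,A)$ onto $K^{\mathrm b}(\mathrm{inj}\,A)$. So your condition says exactly that $K^{\mathrm b}(\mathrm{proj}\,A)=K^{\mathrm b}(\mathrm{inj}\,A)$, i.e.\ that $A$ is Iwanaga--Gorenstein. It holds for every algebra of finite global dimension, for instance the path algebra of $1\to 2$.
\item Its derived invariance therefore only makes $B$ Gorenstein. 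Under NC a non-selfinjective Gorenstein algebra has finite dominant dimension, so this would not give $\domdim B=\infty$.
\item Rickard's stable-equivalence theorem assumes that both algebras are already selfinjective. It therefore cannot be the source of selfinjectivity of $B$.
\end{itemize}

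Derived invariance of selfinjectivity is a genuine theorem, not a formal consequence of a criterion like yours. With a correct citation your proof is complete.
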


\begin{proof}
The equivalence of (1) and (2) is part of Theorem~\ref{mainresult1}, with $R=K$.
Statement (1) implies (3) immediately. Statement (3) implies
condition~\ref{cond:tilting} of Theorem~\ref{thm:tilting-note}, since
tilting equivalences are derived equivalences; that theorem then
gives (2).

Under (1), an algebra has infinite dominant dimension exactly when it
is selfinjective. Since selfinjectivity is preserved under derived
equivalences by \cite[Corollary~5.3]{XZ25}, this proves (4).
Conversely, assume (4). If an algebra $A$ of infinite dominant dimension
were not Morita, the canonical tilting module as in the proof of
Theorem~\ref{thm:tilting-note} would produce a derived equivalent
algebra of dominant dimension at most one. This contradicts (4).
Thus condition~\ref{cond:morita} of that theorem holds, giving (2),
and hence (1).
\end{proof}

As a final remark, we note that we worked in this section with algebras over field instead of general Artin algebras, since we used that being selfinjective is closed under derived equivalences, which is to our knowledge only established for algebras over fields in \cite{XZ25}.
The conjecture of Chen-Xi is stated for algebras over fields.

\section*{Statement on the use of AI}
ChatGPT Astra was used in author-directed research and manuscript
preparation, including the development of auxiliary arguments,
expansion of proof details, literature searches, consistency checks,
and \LaTeX{} editing. The authors
are responsible for the mathematical results, proofs, citations, and
final text.

\end{document}